\newcounter{thmglobal}
\newtheorem{theorem}[thmglobal]{Theorem}
\newtheorem{lemma}{Lemma}
\newtheorem{proposition}{Proposition}
\newcommand{\<}{\langle}
\renewcommand{\>}{\rangle}
\renewcommand{\(}{\left(}
\renewcommand{\)}{\right)}
\newcommand{\splitalign}[2]{\vphantom{#2} #1 \right. \\ & \left. \vphantom{#1} #2}
\newcommand{\triplenorm}[1]{{\left\lvert\mkern-1.0mu\left\lvert\mkern-1.0mu\left\lvert #1 \right\rvert\mkern-1.0mu\right\rvert\mkern-1.0mu\right\rvert}}
\begin{document}

\title[Duality between Eigenelements of Ruelle and Koopman operators]{Duality between Eigenfunctions and Eigendistributions of Ruelle and Koopman operators via an integral kernel}

\author{P. Giulietti*}
\address{UFRGS, Instituto de Matem\'atica, Av. Bento Gon\c calves, 9500. CEP 91509-900, Porto Alegre, RS, Brasil}
\email{paologiulietti.math@gmail.com}
\thanks{* Partially supported by CNPq-Brazil project number 407129/2013-8}

\author{A. O. Lopes**}
\address{UFRGS, Instituto de Matem\'atica, Av. Bento Gon\c calves, 9500. CEP 91509-900, Porto Alegre, RS, Brasil}
\email{arturoscar.lopes@gmail.com}
\thanks{** Partially supported by CNPq, INCT}

\author{V. Pit***}
\address{UFRJ, Instituto de Matem\'atica, Av. Athos da Silveira Ramos 149, Centro de Tecnologia - Bloco C - Cidade Universit\'aria - Ilha do Fund\~ao, Rio de Janeiro, RJ, Brasil}
\email{pit@im.ufrj.br}
\thanks{*** Partially supported by FAPESP project number 2011/12338-0 and CAPES}

\date{\today}

\begin{abstract}

We consider the classical dynamics given by a one sided shift on the Bernoulli space of $d$ symbols.
We study, on the space of H\"older functions, the eigendistributions of the Ruelle operator with a given potential.
Our main theorem shows that for any isolated eigenvalue, the eigendistributions of such Ruelle operator are dual to eigenvectors of a Ruelle operator with a conjugate potential.
We also show that the eigenfunctions and eigendistributions of the Koopman operator satisfy a similar relationship.
To show such results we employ an integral kernel technique, where the kernel used is the involution kernel.

\end{abstract}

\maketitle

\section{Introduction and main results}

Let $M = \{ 1, 2, \hdots, d \}$ be the classical alphabet with $d$ symbols.
We consider on $M$ the discrete distance $\tilde{d}$ in such way the distance among different points is equal to $1$.
The space $\Omega = M^\mathbb{N}$ of all the sequences $x = (x_1 x_2 \hdots), x_j \in M, j \in \mathbb{N}$ is equipped with the usual shift operator $\sigma : \Omega \to \Omega$ such that $\sigma(x_1 x_2 \hdots) = (x_2 x_3 \hdots)$ and with the distance :
\[ d_\Omega(x, x') = \sum_{n \ge 1} \frac{1}{2^n} \tilde{d}(x_n , x_n') \]
that makes $(\Omega, d_\Omega)$ into a compact metric space.
The choice of the exponential ratio $\frac{1}{2}$ is arbitrary, any value between zero and one can equally be chosen.
If $x, x' \in \Omega$, we will note $x \sim_n x'$ when their symbols agree until the coordinate $n \ge 1$.

Now take $\Omega^\star = M^\mathbb{N}$ another copy of the same space where points are written, for notational convenience, backwards $y = (\hdots y_2 y_1)$, and on which the shift $\sigma^\star : \Omega^\star \to \Omega^\star$ acts by $\sigma^\star(\hdots y_2 y_1) = (\hdots y_3 y_2)$.
We equip this space with the metric $d_{\Omega^\star}$ that makes it into a compact metric space, as well as with the relations $\sim_n$.

$M^\mathbb{Z}$ will be identified with the product $X = \Omega^\star \times \Omega$ where points are written $(y | x)$, $y \in \Omega^\star$ and $x \in \Omega$.
$X$ is equipped with its own two-sided shift $\hat{\sigma} : X \to X$ defined by :
\[ \hat{\sigma}(\hdots y_2 y_1 | x_1 x_2 \hdots) = (\hdots y_2 y_1 x_1 | x_2 x_3 \hdots) \]
and with the distance :
\[ d_X(u, u') = \sum_{n \in \mathbb{Z} \setminus 0} \frac{1}{2^{|n|}} \tilde{d}(u_n , v_n) = d_{\Omega^\star}(y, y') + d_\Omega(x, x') \]
where $u = (y | x)$, $u' = (y' | x')$ and :
\[ u_n = \begin{cases} x_n & \text{if $n > 0$} \\ y_{-n} & \text{if $n < 0$} \end{cases} \]
We denote by $\tau_y(x) = \tau_{y_1}(x) = (y_1 x)$ the inverse branch of $\sigma$ parameterized by $y \in \Omega^\star$, so that $\hat{\sigma}^{-1}(y | x) = (\sigma^\star(y) | \tau_y(x))$.

A map $f : \Omega \to \mathbb{C}$ is {\em $\theta$-H\"older} whenever there exists a $C \ge 0$ such that :
\[ \forall x, x' \in \Omega, \forall n \ge 0, x \sim_n x' \Rightarrow |f(x) - f(x')| \le C \theta^n \]
When $f$ is $\theta$-H\"older, we can define its seminorm $\| f \|_\theta$ by :
\[ \| f \|_\theta = \sup_{n \ge 1} \sup_{x \sim_n x'} \frac{|f(x) - f(x')|}{\theta^n} \]
By compactness of $\Omega$, all $\theta$-H\"older maps are bounded.
We denote by $\| \cdot \|_\infty$ the usual supremum norm, so that $\| \cdot \|_\infty + \| \cdot \|_\theta$ is a norm for the space of $\theta$-H\"older maps.
An easy computation shows that if $f$ and $g$ are two $\theta$-H\"older maps, then so do $f g$ and $\exp f$, and we have :
\begin{align*}
 \| f g \|_\theta    &\le \| f \|_\infty \| g \|_\theta + \| f \|_\theta \| g \|_\infty \\
 \| \exp f \|_\theta &\le \| f \|_\theta \exp \| f \|_\infty
\end{align*}
Note that if $\theta \le \frac{1}{2}$ this definition is equivalent to the usual definition of H\"older (or even Lipschitz) functions in the metric space $(\Omega, d_\Omega)$, but has the advantage that the exponent does not depend on the choice of the metric.
This is also the definition adopted in \cite{PP}.

Likewise, a map $g : X \to \mathbb{C}$ will be said to be {\em $\theta$-H\"older} whenever there exists a $C \ge 0$ such that :
\[ \forall u = (y | x), u' = (y' | x') \in X, \forall n \ge 0, x \sim_n x' \text{~and~} y \sim_n y' \Rightarrow |g(u) - g(u')| \le C \theta^n \]
and the smallest constant $C$ that satisfies this relation will be noted $\| g \|_\theta$.
It is trivial to check that a function is $\theta$-H\"older if and only if its partial maps $g(\cdot | x')$ and $g(y' | \cdot)$ are $\theta$-H\"older uniformly in $x' \in \Omega$ and $y' \in \Omega^\star$.

We will denote by $H_\theta(\Omega)$, $H_\theta(\Omega^\star)$ and $H_\theta(X)$ the spaces of H\"older functions on respectively $\Omega$, $\Omega^\star$ and $X$, equipped with their respective $\| \cdot \|_\infty + \| \cdot \|_\theta$ norms.
We will also note $\mathcal{C}(\Omega)$ the space of continuous maps on $\Omega$, equipped with the supremum norm.

Consider a function $A \in H_\theta(\Omega)$, that we will usually call a potential.
By adding a coboundary over $(X, \hat{\sigma})$ to $A$, it is possible to define a dual potential of $A$ that only depends on the past coordinates.
More precisely, we will prove in proposition \ref{prop:invkern} that there exist maps $W : X \to \mathbb{C}$, $W \in H_\theta(X)$ and $A^\star : \Omega^\star \to \mathbb{C}$, $A^\star \in H_\theta(\Omega^\star)$ such that for any $(y | x) \in X$ :
\begin{equation} \label{W0}
 A^\star(y) = A(\hat{\sigma}^{-1}(y | x)) + W(\hat{\sigma}^{-1}(y | x)) - W(y | x)
\end{equation}
where $A^\star$ does not depend on $x$.
Such a map $A^\star$ is called a {\em dual potential} of $A$, and any map $W : X \to \mathbb{C}$ that satisfies the relation \eqref{W0} is called an {\em involution kernel} between $A$ and $A^\star$.

For example, if the potential $A$ depends on a finite number of coordinates (that is, if there exists $k > 1$ such that $A(x_1 \hdots x_k, x_{k+1} \hdots) = A(x_1 \hdots x_k)$), then it easy to see that $A$ is $\theta$-H\"older for any $\theta < 1$, hence admits an involution kernel $W$ with the same regularity.
Moreover, when $A$ depends on the first two coordinates (i.e. $k = 2$), then section 5 from \cite{BLLbook} tells us that $A^\star$ is the transpose of the matrix whose entries are $A(x_1, x_2)$.
If $\Omega$ is seen as the coding of a smooth uniformly expanding dynamical system (like $T(x) = 2x \text{\,mod\,} 1$ on the circle), then the pull-back to $\Omega$ of any smooth potential will also be H\"older.

Let $\nu$ be any Borel probability measure on $M$, given by $\nu = \sum_{j=1}^d a_j \delta_j$ with $\sum_{j=1}^d a_j = 1$.
In order to simplify the notations, the integration of a function $f : M \to \mathbb{C}$ with respect to this probability $\nu$ will be written $\int f(a) da$.

We can define the {\em Ruelle operator} $\mathcal{L}_A : H_\theta(\Omega) \to H_\theta(\Omega)$ associated with the potential $A$ and the measure $\nu$ by :
\[ \mathcal{L}_A \varphi(x) = \int_M e^{A(a x)} \varphi(a x) da \]
It is more common to define the Ruelle operator via the counting a priori ``measure'' $\nu = \sum_{j=1}^d \delta_j$ and not using the probability $d a = \nu = \sum_{j=1}^d a_j \delta_j$.

For simplicity we write $A(a x)$ in the sense of $A(a x) = A(\tau_a(x))$, that is $(a x)$ is the element obtained by adding one extra symbol $a$ to $x$.
Along the paper we will not comment any further on the matter, since the context will allow the reader to understand what we are doing.
An exposition of the general theory of this operator, as well as and more references, can be found in \cite{BCLMS,LMMS}.

If $A^\star$ is a dual potential of $A$, we can also define the Ruelle operator
$\mathcal{L}_{A^\star} : H_\theta(\Omega^\star) \to H_\theta(\Omega^\star)$ associated to $A^\star$ by :
\[ \mathcal{L}_{A^\star} \varphi(y) = \int_M e^{A^\star(y a)} \varphi(y a) da \]

Note that the definition of these operators depends on the choice of the a priori measure $\nu$ on $M$.
The classical Ruelle operator $\sum_{t=1}^d e^{A(t x)} f(t x)$ which appears in the literature corresponds to our setting equipped with the a priori measure $\nu = \sum_{j=1}^d \delta_j$ (see \cite{PP} and \cite{Ruelle}).
This implies that the results presented in this article can be easily adapted to the usual case.
A more thorough discussion about this point can be found in \cite{LMMS}.

Recall that given $A \in H_\theta(\Omega)$ we denote its Birkhoff sums by :
\begin{equation} \label{eq:bir-sum}
 A^n(x) = \sum_{k=0}^{n-1} A(\sigma^k(x))
\end{equation}
Since $A$ is bounded (hence $e^{\Re A}$ as well), the Ruelle operator $\mathcal{L}_A$ has a finite spectral radius when acting on $H_\theta(\Omega)$.
According to theorem 1.5 from \cite{Bal}, the number $\rho$ defined as the limit :
\begin{equation} \label{eq:rho}
 \rho = \lim_{n \to \infty} \sup_{x' \in \Omega} \( \int dx_1 \hdots \int dx_n e^{\Re A^n(x_1 \hdots x_n x')} \)^{\frac{1}{n}}
\end{equation}
is an upper bound of this spectral radius.
Likewise, the spectral radius of $\mathcal{L}_{A^\star}$ admits an upper bound $\rho^\star$ defined similarly.

We shall be interested in the ``eigenelements'' of these Ruelle operators.
A map $\psi \in H_\theta(\Omega)$ is an {\em eigenfunction} of $\mathcal{L}_A$ associated to the eigenvalue $\lambda$ when :
\[ \forall x \in \Omega, \mathcal{L}_A \psi(x) = \lambda \psi(x) \]
Likewise, a continuous linear functional $\mathcal{D} : H_\theta(\Omega^\star) \to \mathbb{C}$ is an {\em eigendistribution} of $\mathcal{L}_{A^\star}$ associated with the eigenvalue $\lambda$ when :
\[ \forall \varphi \in H_\theta(\Omega^\star), \< \mathcal{D}, \mathcal{L}_{A^\star} \varphi \> = \left\< \mathcal{D}, y \mapsto \int_M e^{A^\star(y a)} \varphi(y a) da \right\> = \lambda \< \mathcal{D}, \varphi \> \]
We point out that we do not require $\lambda$ to be the main eigenvalue, hence the $\psi$ we consider might not be the main eigenfunction and $\mathcal{D}$ is not necessarily a measure.\footnote{We will use the two notations $y \mapsto \int_M e^{A^\star(y a)} \varphi(y a) da$ and $\int_M e^{A^\star(\cdot a)} \varphi(\cdot a) da$ indistinctly to indicate the needed function.}

Our main purpose in this article is to relate explicitly the eigendistributions $\mathcal{D}$ of the Ruelle operator associated with $A^\star$ and the eigenfunctions $\psi$ of the Ruelle operator associated with $A$.
More precisely, we will show that :

\begin{theorem} \label{th:ruelle}
 Let $A \in H_\theta(\Omega)$, $A^\star \in H_\theta(\Omega^\star)$ a dual potential of $A$, and $W \in H_\theta(X)$ an involution kernel between them.
 Denote by $\rho$ the upper bound of the spectral radius of $\mathcal{L}_A$ given by \eqref{eq:rho}.
 Then if $|\lambda| > \rho \theta$ the map :
 \[ \Phi_W : \mathcal{D} \in H_\theta(\Omega^\star)' \mapsto \left( x \in \Omega \mapsto \left\< \mathcal{D}, e^{W(\cdot | x)} \right\> \right) \in \mathcal{C}(\Omega) \]
 is a continuous linear operator that induces an isomorphism from the space of eigendistributions of $\mathcal{L}_{A^\star}$ for the eigenvalue $\lambda$ onto the space of eigenfunctions of $\mathcal{L}_A$ for the same eigenvalue $\lambda$.

 Moreover, there is an explicit expression of its inverse as the limit of a sequence of measures.
 Indeed, if $\psi \in H_\theta(\Omega)$ is a $\lambda$-eigenfunction of $\mathcal{L}_A$, then :
\begin{align*}
 \forall \varphi \in H_\theta(\Omega^\star), \< \mathcal{D}, \varphi \> = \lim_{n \to \infty} &\int dx_1 \hdots \int dx_n \psi(x_n \hdots x_1 x') \\
 &\hspace{1em} \frac{e^{A^n(x_n \hdots x_1 x')}}{\lambda^n} e^{- W(0^\infty x_n \hdots x_1 | x')} \varphi(0^\infty x_n \hdots x_1)
\end{align*}
 for any choice of $x' \in \Omega$ and $0^\infty = (\hdots 0) \in \Omega^\star$, is an element of $H_\theta(\Omega^\star)'$, does not depend on $x'$, is a $\lambda$-eigendistribution of $\mathcal{L}_{A^\star}$, and satisfies $\Phi_W(\mathcal{D}) = \psi$.
\end{theorem}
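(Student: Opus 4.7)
The plan is to treat the theorem in four stages: verifying that $\Phi_W$ maps $\lambda$-eigendistributions of $\mathcal{L}_{A^\star}$ into $\lambda$-eigenfunctions of $\mathcal{L}_A$, constructing the candidate inverse $\Psi$ by proving that the stated limit exists, and then establishing the two identities $\Phi_W \circ \Psi = \mathrm{id}$ and $\Psi \circ \Phi_W = \mathrm{id}$ on the respective eigenspaces. Continuity of $\Phi_W$ reduces to the uniform bound $\| e^{W(\cdot|x)} \|_\infty + \| e^{W(\cdot|x)} \|_\theta \le C_W$ coming from the H\"older inequalities for products and exponentials, and $\Phi_W(\mathcal{D}) \in \mathcal{C}(\Omega)$ follows from the continuity of $x \mapsto e^{W(\cdot|x)}$ as a map into $H_\theta(\Omega^\star)$. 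Given a $\lambda$-eigendistribution $\mathcal{D}$, interchanging the integration over $a \in M$ with the action of $\mathcal{D}$ (allowed by continuity of the integrand) and applying the one-step identity $A(ax) + W(y|ax) = A^\star(ya) + W(ya|x)$ yields $\mathcal{L}_A [\Phi_W \mathcal{D}](x) = \< \mathcal{D}, \mathcal{L}_{A^\star}[e^{W(\cdot|x)}] \> = \lambda \Phi_W \mathcal{D}(x)$.

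A central technical ingredient for the remaining stages is the iterated form of \eqref{W0}, obtained by iterating the one-step identity along the backward orbit of $\hat{\sigma}$ and telescoping the intermediate $W$-terms: for all $y \in \Omega^\star$, $x' \in \Omega$ and $u_1, \dots, u_n \in M$,
\[
 A^n(u_1 \cdots u_n x') + W(y | u_1 \cdots u_n x') = (A^\star)^n(y u_1 \cdots u_n) + W(y u_1 \cdots u_n | x').
\]
A consequence is that $\rho^\star = \rho$, since the $W$-contribution to the iterated integrals is a bounded multiplicative factor. I would show that $(I_n(\varphi))_n$ is Cauchy as follows. In $I_{n+1}(\varphi)$, the integration variable $x_{n+1}$ enters through two groups of factors: the Ruelle-type factors $e^{A(x_{n+1} \cdots x_1 x')} \psi(x_{n+1} \cdots x_1 x')$, whose integral in $x_{n+1}$ reduces to $\lambda \psi(x_n \cdots x_1 x')$ via $\mathcal{L}_A \psi = \lambda \psi$; and the past-side factors $e^{-W(0^\infty x_{n+1} \cdots x_1|x')} \varphi(0^\infty x_{n+1} \cdots x_1)$, which depend on $x_{n+1}$ only through the past coordinate of depth $n+1$ and therefore vary by $O(\theta^n)$ across $M$. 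Replacing these H\"older factors by their value at $x_{n+1} = 0$ turns $I_{n+1}(\varphi)$ into $I_n(\varphi)$ up to an error bounded by $|\lambda|^{-(n+1)} \theta^n \int e^{\Re A^n} \lesssim (\|\varphi\|_\infty + \|\varphi\|_\theta)(\theta(\rho+\epsilon)/|\lambda|)^n$, which is summable precisely when $|\lambda| > \rho \theta$. Hence $\< \mathcal{D}, \varphi \> := \lim_n I_n(\varphi)$ is well-defined and a bounded linear functional on $H_\theta(\Omega^\star)$. Independence of $x'$ and of $0^\infty$ is obtained by applying the same spectral estimate to the reformulation of $I_n$ in $(A^\star)^n$-form coming from the iterated identity above: changing $x'$ only modifies the H\"older factors $\psi(x_n \cdots x_1 x')$ and $W(0^\infty | x_n \cdots x_1 x')$, both of which vary by $O(\theta^n)$ as $x'$ enters only beyond position $n$.

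For the eigendistribution property, rewriting both $I_{n+1}(\varphi)$ and $\lambda^{-1} I_n(\mathcal{L}_{A^\star} \varphi)$ in the $(A^\star)^{n+1}$-form shows that their integrands differ only by the substitution $x_n \cdots x_1 x' \leftrightarrow x_n \cdots x_0 x'$ inside the H\"older factors $\psi$ and $W$, a variation of order $\theta^n$; the spectral bound then gives $I_{n+1}(\varphi) - \lambda^{-1} I_n(\mathcal{L}_{A^\star} \varphi) = O((\theta \rho/|\lambda|)^n)$, whose vanishing in the limit yields $\lambda \< \mathcal{D}, \varphi \> = \< \mathcal{D}, \mathcal{L}_{A^\star} \varphi \>$. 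The identity $\Phi_W(\mathcal{D}) = \psi$ then follows elegantly from the independence of $x'$ just proved: fixing $x \in \Omega$ and computing $\< \mathcal{D}, e^{W(\cdot|x)} \>$ with the choice $x' = x$, the factors $e^{W(0^\infty x_n \cdots x_1|x)}$ and $e^{-W(0^\infty x_n \cdots x_1|x)}$ cancel exactly in every $I_n$, leaving $\lambda^{-n} \mathcal{L}_A^n \psi(x) = \psi(x)$.

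The step I expect to be the main obstacle is injectivity of $\Phi_W$, which I would handle by proving the sharper statement $\Psi \circ \Phi_W = \mathrm{id}$. Given $\mathcal{D}'$ a $\lambda$-eigendistribution, the eigendistribution property gives $\< \mathcal{D}', \varphi \> = \lambda^{-n} \< \mathcal{D}', \mathcal{L}_{A^\star}^n \varphi \>$; expanding $\mathcal{L}_{A^\star}^n \varphi$ and using the iterated identity to substitute for $(A^\star)^n$ yields
\[
 \< \mathcal{D}', \mathcal{L}_{A^\star}^n \varphi \> = \int du_1 \cdots du_n \, e^{A^n(u_1 \cdots u_n x')} \< \mathcal{D}'_y, e^{W(y | u_1 \cdots u_n x')} e^{-W(y u_1 \cdots u_n | x')} \varphi(y u_1 \cdots u_n) \>.
\]
The delicate step is to replace $e^{-W(y u_1 \cdots u_n|x')} \varphi(y u_1 \cdots u_n)$ inside the bracket by its value at $y = 0^\infty$ in order to factor $\Phi_W(\mathcal{D}') = \psi'$ out. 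This requires controlling the replacement error not just in $\| \cdot \|_\infty$ but in the full norm $\| \cdot \|_\infty + \| \cdot \|_\theta$ as a function of $y$, so that $\mathcal{D}'$ applied to the error is $O(\theta^n)$; the key point is that $y u_1 \cdots u_n \sim_n y' u_1 \cdots u_n$ for any $y, y' \in \Omega^\star$, which provides the $\theta^n$ bound. After this step the bracket equals $\psi'(u_1 \cdots u_n x') e^{-W(0^\infty | u_1 \cdots u_n x')} \varphi(0^\infty u_1 \cdots u_n) + O(\theta^n)$, and the spectral estimate gives $\< \mathcal{D}', \varphi \> = I_n^{\psi'}(\varphi) + O((\theta \rho/|\lambda|)^n)$. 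Taking $n \to \infty$ yields $\mathcal{D}' = \Psi(\Phi_W(\mathcal{D}'))$, which closes the inversion and proves that $\Phi_W$ is a bijection.
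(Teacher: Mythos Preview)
Your construction of the inverse and the verification $\Phi_W(\mathcal{D}) = \psi$ track the paper's Section~3 closely: the same sequence $\mathcal{D}_{n,x'}$, the same Cauchy estimate (Lemma~\ref{ooo}), the same base-point-independence (Lemma~\ref{oo}), and the same cancellation for $\Phi_W(\mathcal{D})=\psi$ (Lemma~\ref{lem:surj}). Where you diverge is in closing the isomorphism. You prove injectivity of $\Phi_W|_{F_\lambda(A^\star)}$ directly by writing $\<\mathcal{D}',\varphi\>=\lambda^{-n}\<\mathcal{D}',\mathcal{L}_{A^\star}^n\varphi\>$ and freezing the past coordinate at $0^\infty$; your key estimate, that $y\mapsto e^{-W(yu_1\cdots u_n|x')}\varphi(yu_1\cdots u_n)$ differs from its value at $0^\infty$ by $O(\theta^n)$ in the full $H_\theta(\Omega^\star)$ norm, is correct (since $y\sim_p y'$ forces $yu_1\cdots u_n\sim_{p+n}y'u_1\cdots u_n$, so the $\theta$-seminorm picks up an extra $\theta^n$). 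The paper instead obtains injectivity by a dimension argument (Lemma~\ref{lem:dim}): it invokes the Dunford--Schwartz identity $\dim E_\lambda(A)=\dim F_\lambda(A)$ for isolated eigenvalues and the symmetry of the involution-kernel relation under exchanging $(\Omega,\sigma,A)\leftrightarrow(\Omega^\star,\sigma^\star,A^\star)$ to deduce $\dim F_\lambda(A^\star)=\dim E_\lambda(A)<\infty$. The paper even remarks that a direct injectivity proof was not apparent in the symbolic setting, so your step~4 is a genuine alternative on that point.

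There is, however, a gap in your stage~1 which the paper's dimension argument silently closes but your route does not. You verify $\mathcal{L}_A[\Phi_W\mathcal{D}]=\lambda\,\Phi_W\mathcal{D}$ but never show $\Phi_W\mathcal{D}\in H_\theta(\Omega)$; bounding $|\Phi_W\mathcal{D}(x)-\Phi_W\mathcal{D}(x')|$ by $C\theta^m$ when $x\sim_m x'$ would require $\|e^{W(\cdot|x)}-e^{W(\cdot|x')}\|_\theta=O(\theta^m)$, a mixed second-order H\"older condition on $W$ that is not implied by $W\in H_\theta(X)$ alone. Consequently your four stages only establish that $\Phi_W$ is a bijection from $F_\lambda(A^\star)$ onto its image inside the \emph{continuous} $\lambda$-eigenfunctions, an image containing $E_\lambda(A)$ but not a priori contained in it, so you cannot yet conclude $\Phi_W(F_\lambda(A^\star))=E_\lambda(A)$. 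In the paper this is automatic: the image is a subspace of $\mathcal{C}(\Omega)$ of dimension at most $\dim F_\lambda(A^\star)=\dim E_\lambda(A)$ and contains $E_\lambda(A)$, hence equals it. To complete your approach you would need either the mixed H\"older estimate for the explicit $W$ of Proposition~\ref{prop:invkern} (which does hold, by inspection of the series defining $W$) or some independent bound $\dim F_\lambda(A^\star)\le\dim E_\lambda(A)$.
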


Since we do not require $\lambda$ to be the leading eigenvalue, this theorem generalizes the results of \cite{BLT} which consider the analogous result just for the main eigenfunction of the Ruelle operator associated with $A$, in which case the eigendistribution $\mathcal{D}$ is a probability measure $\mu$.
In contrast, our approach can be applied to questions for which it is natural to analyze other eigenelements that are not associated with the leading eigenvalue (see for instance \cite{Fro}).

However, according to theorem 10.2 from \cite{PP} or theorem 1.5 from \cite{Bal}, the condition $|\lambda| > \rho \theta$ implies that, if $\lambda$ is an eigenvalue of $\mathcal{L}_A$, then it must lie in its isolated spectrum.
We point out that this condition is not empty.
Indeed, one can find in \cite{KR} an example of an analytic map $T$ of the unit circle, homeomorphic to $2x \text{\,mod\,} 1$, whose derivative is everywhere greater than $\frac{3}{2}$, and whose transfer operator $\mathcal{L}_{-\log|T'|}$ acting on the space of $\mathcal{C}^1$ maps admits an eigenvalue $\lambda$ such that $\frac{3}{4} < |\lambda| < 1$.
When pulled-back to the fulled shift with two symbols, $- \log |T'|$ lifts to a $\frac{2}{3}$-H\"older potential and $\lambda$ is an isolated non-maximal eigenvalue for its associated Ruelle operator.

Distributions related to eigenfunctions and the involution kernel appeared in \cite{Pit1}, \cite{Pit2}, \cite{Pit3} and \cite{LT} (plus, in a non rigorous form, in \cite{Bo1} and \cite{Bo2}).
Moreover, distributions have been intensively studied, starting with \cite{BKL02}, through anisotropic Banach spaces and there is now a vast literature on them.
On such spaces of generalized functions, the transfer operator is quasi-compact and its properties can be used in attacking a variety of problems (see for example \cite{GLP13}, \cite{BT08}).
We hope that the approach presented here could be extended to such context.

In the last section, we will translate this result in the language of Koopman operators.
If $B \in H_\theta(\Omega)$, the {\em Koopman operator} $U_B : H_\theta(\Omega) \to H_\theta(\Omega)$ associated with the potential $B$ is defined as :
\[ \forall x \in \Omega, U_B \varphi (x) = e^{B(x)} \varphi \circ \sigma(x) \]
One can similarly define a Koopman operator $U_{B^\star} : H_\theta(\Omega^\star) \to H_\theta(\Omega^\star)$ associated with a potential $B^\star \in H_\theta(\Omega^\star)$.
We will prove the following result :

\begin{theorem} \label{th:koopman}
 Let $B \in H_\theta(\Omega)$ and $C^\star \in H_\theta(\Omega^\star)$, which is not necessarily cohomologous to $B$.
 Let $A \in H_\theta(\Omega)$, and $A^\star \in H_\theta(\Omega^\star), W \in H_\theta(X)$ the associated dual potential and involution kernel.
 Denote by $\rho$ the upper bound of the spectral radius of $\mathcal{L}_A$ given by \eqref{eq:rho}.
 Then there exist $f \in H_\theta(\Omega), \alpha > 0$ depending on $A$ and $B$, $g \in H_\theta(\Omega^\star), \beta > 0$ depending on $A^\star$ and $C^\star$, such that $f, g > 0$ and for every $\lambda$ satisfying $|\lambda| > \rho \theta$ the map :
 \[ \Psi_{A,B,C^\star} : \nu \in H_\theta(\Omega^\star)' \mapsto f \Phi_W \( \frac{1}{g} \nu \) \in H_\theta(\Omega) \]
 is a continuous linear operator that induces an isomorphism from the space of eigendistributions of $U_{C^\star}$ for the eigenvalue $\frac{\beta}{\lambda}$ onto the space of eigenfunctions of $U_B$ for the eigenvalue $\frac{\alpha}{\lambda}$.

 Moreover, there is an explicit expression of its inverse as the limit of a sequence of measures.
 Indeed, if $\psi \in H_\theta(\Omega)$ is a $\frac{\alpha}{\lambda}$-eigenfunction of $U_B$, then :
 \begin{align*}
  \forall \varphi \in H_\theta(\Omega^\star), \< \mathcal{D}, \varphi \> = \lim_{n \to \infty} &\int dx_1 \hdots \int dx_n \( \frac{\psi}{f} \)(x_n \hdots x_1 x') \\
  &\hspace{1em} \frac{e^{A^n(x_n \hdots x_1 x')}}{\lambda^n} e^{- W(0^\infty x_n \hdots x_1 | x')} (g \varphi)(0^\infty x_n \hdots x_1)
 \end{align*}
 for any choice of $x' \in \Omega$ and $0^\infty = (\hdots 0) \in \Omega^\star$, is an element of $H_\theta(\Omega^\star)'$, does not depend on $x'$, is a $\frac{\beta}{\lambda}$-eigendistribution of $U_{C^\star}$, and satisfies $\Psi_{A,B,C^\star}(\mathcal{D}) = \psi$.
\end{theorem}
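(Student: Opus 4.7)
The strategy is to reduce Theorem~\ref{th:koopman} to Theorem~\ref{th:ruelle} by conjugating each Koopman eigenproblem into a Ruelle eigenproblem using positive functions $f$ on $\Omega$ and $g$ on $\Omega^\star$ extracted from Ruelle--Perron--Frobenius theory. First, I apply RPF to $\mathcal{L}_{A+B}$ acting on $H_\theta(\Omega)$ to obtain a strictly positive leading eigenfunction; I denote its reciprocal $f > 0$ and the leading eigenvalue $\alpha > 0$, so that $\mathcal{L}_{A+B}(1/f) = \alpha/f$. A symmetric construction on $\Omega^\star$ applied to $\mathcal{L}_{A^\star + C^\star}$ produces $g > 0$ in $H_\theta(\Omega^\star)$ and $\beta > 0$ with $\mathcal{L}_{A^\star + C^\star}(1/g) = \beta/g$.

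\textbf{Conjugation identities.} These choices set up an equivalence between the Koopman and Ruelle eigenproblems at matching eigenvalues. If $\psi \in H_\theta(\Omega)$ satisfies $U_B \psi = (\alpha/\lambda)\psi$, then substituting the local relation $\psi(ax) = (\lambda/\alpha) e^{B(ax)} \psi(x)$ into the Ruelle operator yields
\[ \mathcal{L}_A(\psi/f)(x) = \int e^{A(ax)} \frac{\psi(ax)}{f(ax)}\,da = \frac{\lambda\,\psi(x)}{\alpha}\,\mathcal{L}_{A+B}(1/f)(x) = \lambda\cdot\frac{\psi(x)}{f(x)}, \]
so that $\psi/f$ is a $\lambda$-eigenfunction of $\mathcal{L}_A$. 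A symmetric calculation shows that if $\nu \in H_\theta(\Omega^\star)'$ satisfies $\langle \nu, U_{C^\star}\varphi\rangle = (\beta/\lambda)\langle \nu, \varphi\rangle$ for all $\varphi$, then the distribution $\nu/g$ defined by $\langle \nu/g, \varphi\rangle := \langle \nu, \varphi/g\rangle$ is a $\lambda$-eigendistribution of $\mathcal{L}_{A^\star}$. The reverse implications are obtained by running these computations backward and using strict positivity of $f, g$.

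\textbf{Assembly and explicit inverse.} Writing
\[ \Psi_{A,B,C^\star}(\nu) = f\cdot\Phi_W(\nu/g), \]
the map factors as the composition of three isomorphisms: multiplication by $1/g$ sending $(\beta/\lambda)$-eigendistributions of $U_{C^\star}$ to $\lambda$-eigendistributions of $\mathcal{L}_{A^\star}$; then $\Phi_W$, which by Theorem~\ref{th:ruelle} sends these to $\lambda$-eigenfunctions of $\mathcal{L}_A$; then multiplication by $f$ sending these to $(\alpha/\lambda)$-eigenfunctions of $U_B$. For the inverse, given an $(\alpha/\lambda)$-eigenfunction $\psi$ of $U_B$, the function $\psi/f$ is a $\lambda$-eigenfunction of $\mathcal{L}_A$; Theorem~\ref{th:ruelle}'s inverse formula applied to $\psi/f$ produces a $\lambda$-eigendistribution $\mathcal{D}$ of $\mathcal{L}_{A^\star}$, and $g\mathcal{D}$ (acting on $\varphi$ as $\langle \mathcal{D}, g\varphi\rangle$) is the corresponding $(\beta/\lambda)$-eigendistribution of $U_{C^\star}$. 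Substituting $\psi \to \psi/f$ and $\varphi \to g\varphi$ inside the limit formula of Theorem~\ref{th:ruelle} yields the announced explicit expression. Continuity of $\Psi_{A,B,C^\star} : H_\theta(\Omega^\star)' \to H_\theta(\Omega)$ follows from continuity of $\Phi_W$ and boundedness of $f, g, 1/f, 1/g$ on the compact shift spaces.

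\textbf{Main obstacle.} The delicate point is verifying the reverse direction of the conjugation step, namely that every $\lambda$-eigenfunction of $\mathcal{L}_A$ arises as $\psi/f$ for some Koopman eigenfunction $\psi$ of $U_B$ at $\alpha/\lambda$, and analogously for eigendistributions. The Koopman eigenequation is local whereas the Ruelle equation is integral, so this correspondence is nontrivial; its proof must leverage both the Perron equations for $1/f$ and $1/g$ and the uniqueness/finite-multiplicity of Hölder eigenelements in the isolated spectrum, guaranteed by the hypothesis $|\lambda| > \rho\theta$ together with Theorem~\ref{th:ruelle}'s isomorphism.
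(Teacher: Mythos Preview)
Your plan is exactly the paper's: both apply Ruelle--Perron--Frobenius to $\mathcal{L}_{A+B}$ and $\mathcal{L}_{A^\star+C^\star}$ to extract the conjugating data, use the identity $\mathcal{L}_A[f\,U_B g] = g\,\mathcal{L}_A[e^B f]$ to pass between Koopman and Ruelle eigenproblems, and then invoke Theorem~\ref{th:ruelle} for the $\Phi_W$ isomorphism. (Your convention of taking $f,g$ to be the \emph{reciprocals} of the Perron eigenfunctions differs from the paper's Propositions~\ref{prop:trans-koop-func} and~\ref{prop:trans-koop-dist}, which set $f=f_{A+B}$ and $g=f_{A^\star+C^\star}$ directly; your convention is actually the one consistent with the formula $\Psi(\nu)=f\,\Phi_W(\nu/g)$ as written in the theorem statement.)

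You are right, however, to isolate the reverse conjugation as the crux, and neither your sketch nor the paper's argument handles it. The paper's Proposition~\ref{prop:trans-koop-func} only verifies that multiplication by the Perron eigenfunction carries Koopman $(\alpha/\lambda)$-eigenfunctions into Ruelle $\lambda$-eigenfunctions, and then declares bijectivity ``clear'' from positivity of $f_{A+B}$; but positivity yields only injectivity. Your claim that the computation ``runs backward'' is equally unsupported: the Koopman relation is pointwise while the Ruelle relation is an average, so one cannot invert the implication by algebra alone. Concretely, take $B=0$ and let $\lambda$ be a non-leading isolated eigenvalue of $\mathcal{L}_A$ (as in the Keller--Rugh example cited in the introduction). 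Then $\alpha$ is the leading eigenvalue, so $|\alpha/\lambda|>1$, and any bounded $\psi$ satisfying $\psi\circ\sigma=(\alpha/\lambda)\psi$ must vanish identically; hence $\tilde{E}_{\alpha/\lambda}(0)=\{0\}$ while $E_\lambda(A)\neq\{0\}$. Your proposed rescue via finite multiplicity and the hypothesis $|\lambda|>\rho\theta$ does not close this gap, since the example already satisfies that hypothesis. This appears to be a genuine lacuna shared by your proposal and the paper's proof.
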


We recall that the product $f \nu \in H_\theta(\Omega^\star)'$ of a map $f \in H_\theta(\Omega^\star)$ and a distribution $\nu \in H_\theta(\Omega^\star)'$ is the distribution defined by :
\[ \forall \varphi \in H_\theta(\Omega^\star), \< f \nu, \varphi \> = \< \nu, f \varphi \> \]

Note that $B$ and $C^\star$ can be a priori chosen independently, but that the corresponding points of the spectra of $U_B$ and $U_{C^\star}$ for which the theorem is meaningful are constrained by the values of $\alpha$ and $\beta$, and by the condition $|\lambda| > \rho \theta$.

This result is related in some sense to the non rigorous reasoning of \cite{Bo1} and \cite{Bo2}.

One of the motivations for the material of the last section comes from \cite{ABL}, where relations between the spectral radius of the Ruelle operator associated with some potential and the spectral radius of a weighted shift operator (the analogue of our Koopman operator) associated with another potential are established.
This can be related to our proposition \ref{prop:trans-koop-func}.

\subsection*{Acknowledgements}

The authors would like to thank the referee for pointing out a few missprints in the original manuscript.

\section{The involution kernel $W$ and the map $\Phi_W$}

Let $A \in H_\theta(X)$ with $\theta < 1$.
We shall first prove the existence of a $\theta$-H\"older dual potential $A^\star$ of $A$ and of a $\theta$-H\"older involution kernel $W$ between them, as claimed in the introduction.

\begin{proposition} \label{prop:invkern}
If $A \in H_\theta(\Omega)$ then there exist $A^\star \in H_\theta(\Omega^\star)$ and $W \in H_\theta(X)$ such that for every $(y | x) \in X$ :
\[ A^\star(y) = A(\tau_y(x)) + W(\hat{\sigma}^{-1}(y | x)) - W(y | x) \]
does not depend on $x$.
Moreover, one can choose $W$ and $A^\star$ in such a way that :
\[ \| W \|_\theta \le \| A \|_\theta \frac{3 \theta}{1 - \theta} \hspace{2em}\text{~and~}\hspace{2em} \| A^\star \|_\theta \le \| A \|_\theta \frac{2}{1 - \theta} \]
\end{proposition}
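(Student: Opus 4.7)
The plan is to construct $W$ as a telescoping sum along backward orbits of $\hat{\sigma}$, anchored at a fixed reference point. I fix once and for all some $x^\star \in \Omega$ (for instance $x^\star = 0^\infty$) and, abusing notation slightly, view $A$ as a function on $X$ depending only on the $\Omega$-coordinate. I then define
\[ W(y|x) := \sum_{n=1}^{\infty} \bigl[ A(\hat{\sigma}^{-n}(y|x)) - A(\hat{\sigma}^{-n}(y|x^\star)) \bigr]. \]
Since $\hat{\sigma}^{-n}(y|x)$ has $\Omega$-coordinate $y_n \cdots y_1 x_1 x_2 \cdots$, the two arguments inside the $n$-th bracket agree on the first $n$ symbols, so each summand is dominated in modulus by $\| A \|_\theta \theta^n$. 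This gives normal convergence and the crude bound $\| W \|_\infty \le \theta \| A \|_\theta / (1-\theta)$.

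Next I would compute $W(y|x) - W(\hat{\sigma}^{-1}(y|x))$ by exploiting $\hat{\sigma}^{-1}(y|x) = (\sigma^\star(y) \,|\, \tau_y(x))$ and shifting the summation index in the second series. The $x$-dependent parts telescope, leaving behind the diagonal $A(\hat{\sigma}^{-1}(y|x)) = A(\tau_y(x))$ together with an end-residue $A(\hat{\sigma}^{-(N+1)}(y|x))$; grouped with the reindexed $x^\star$-terms this residue recombines into a convergent series depending only on $y$. A direct computation yields
\[ W(y|x) - W(\hat{\sigma}^{-1}(y|x)) = A(\tau_y(x)) - A^\star(y), \]
with
\[ A^\star(y) := A(y_1 x^\star) + \sum_{k=2}^{\infty} \bigl[ A(y_k \cdots y_1 x^\star) - A(y_k \cdots y_2 x^\star) \bigr]. \]
In this form the two sequences inside the $k$-th bracket share their first $k-1$ symbols, so the bracket is bounded by $\| A \|_\theta \theta^{k-1}$ and the series converges absolutely; moreover $A^\star$ is manifestly independent of $x$, which is the whole point.

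For the H\"older seminorm of $W$, I take $(y|x)$ and $(y'|x')$ with $y \sim_n y'$ and $x \sim_n x'$, and split the defining sum at the index $n$. When $k \le n$ the finite blocks $y_k \cdots y_1$ and $y'_k \cdots y'_1$ coincide, so the $k$-th contribution collapses to $A(y_k \cdots y_1 x) - A(y_k \cdots y_1 x')$, whose arguments agree on their first $k+n$ symbols, giving a bound $\| A \|_\theta \theta^{k+n}$ and a total $\le \| A \|_\theta \theta^{n+1}/(1-\theta)$. When $k > n$, the triangle inequality splits each bracket into two pairs that each agree on their first $k$ coordinates, giving a bound $2\| A \|_\theta \theta^k$ per index and a total $\le 2\| A \|_\theta \theta^{n+1}/(1-\theta)$. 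Adding them produces $\| W \|_\theta \le 3\theta \| A \|_\theta/(1-\theta)$. The estimate for $A^\star$ is cleaner: for $y \sim_m y'$ only the indices $k > m$ contribute, and each surviving bracket is bounded by $2\| A \|_\theta \theta^{k-1}$, yielding $\| A^\star \|_\theta \le 2\| A \|_\theta/(1-\theta)$.

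The main obstacle is not any single estimate but selecting a formula for $W$ with three simultaneous virtues: the coboundary equation must hold modulo a function of $y$ alone, the defining series must converge absolutely from the bare H\"older decay of $A$, and there must be enough cancellation when $y \sim_n y'$ and $x \sim_n x'$ to produce exactly $\theta^n$ decay of $W$. Anchoring at $x^\star$ and summing along the backward orbit of $(y|x)$ accomplishes all three at once; the remainder of the proof is careful bookkeeping of partial sums, arranging that the individually divergent pieces appearing in the telescoping for $W(y|x)-W(\hat\sigma^{-1}(y|x))$ cancel one another in the limit $N\to\infty$.
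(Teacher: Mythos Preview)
Your construction is exactly the paper's: with $x^\star$ playing the role of the paper's reference point $z$, your $W$ is identical to the paper's $W(y|x)=\sum_{k\ge1}A(y_k\cdots y_1 x)-A(y_k\cdots y_1 z)$, and your explicit formula for $A^\star$ coincides with the paper's equation for $A^\star(y)$. The only cosmetic difference is that the paper bounds the H\"older variation of $W$ in $x$ and $y$ separately and then adds, while you estimate the joint variation directly by splitting the sum at $k=n$; both routes yield the same constant $3\theta/(1-\theta)$, and your $A^\star$ estimate matches the paper's $n\ge1$ case verbatim.
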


\begin{proof}
We will use the Sina\"i method described in \cite{BLT}.
Fix $z \in \Omega$.
For any $y \in \Omega^\star$, $x \in \Omega$ and $n \ge 1$, let :
\[ W_n(y | x) = \sum_{k=1}^n A(y_k \hdots y_1 x) - A(y_k \hdots y_1 z) \]
Since $A$ is $\theta$-H\"older, we have :
\[ \sum_{k=1}^n \| A(y_k \hdots y_1 x) - A(y_k \hdots y_1 z) \|_\infty \le \| A \|_\theta \sum_{k=1}^n \theta^k = \| A \|_\theta \theta \frac{1 - \theta^n}{1 - \theta} \]
hence this series converges uniformly to :
\[ W(y | x) = \sum_{k \ge 1} A(y_k \hdots y_1 x) - A(y_k \hdots y_1 z) \]

We will now show that $W \in H_\theta(X)$.
First, if $x, x' \in \Omega$ are such that $x \sim_n x'$, then for every $y \in \Omega^\star$ and $p \ge 1$ we have :
\begin{align*}
 | W_p(y | x) - W_p(y | x') | &= \left| \sum_{k=1}^p A(y_k \hdots y_1 x) - A(y_k \hdots y_1 x') \right| \\
 &\le \| A \|_\theta \sum_{k=1}^p \theta^{k+n} = \| A \|_\theta \theta^{n+1} \frac{1 - \theta^p}{1 - \theta}
\end{align*}
which gives after taking $p \to \infty$ :
\[ | W(y | x) - W(y | x') | \le \| A \|_\theta \frac{\theta}{1 - \theta} \theta^n \]
Now, if $y, y' \in \Omega^\star$ are such that $y \sim_n y'$, then for every $x \in \Omega$ and $p \ge n$ we have :
\begin{align*}
 | W_p(y | x) &- W_p(y' | x) | \\
 &= \left| \sum_{k=1}^p (A(y_k \hdots y_1 x) - A(y_k' \hdots y_1' x)) - (A(y_k \hdots y_1 z) - A(y_k' \hdots y_1' z)) \right| \\
 &\le \sum_{k=n+1}^p | A(y_k \hdots y_1 x) - A(y_k \hdots y_1 z) | + | A(y_k' \hdots y_1' x) - A(y_k' \hdots y_1' z) | \\
 &\le 2 \| A \|_\theta \sum_{k=n+1}^p \theta^k = 2 \| A \|_\theta \theta^{n+1} \frac{1 - \theta^{p-n}}{1 - \theta}
\end{align*}
which gives after taking $p \to \infty$ :
\[ | W(y | x) - W(y' | x) | \le 2 \| A \|_\theta \frac{\theta}{1 - \theta} \theta^n \]
Hence we get from these two estimates that $W$ is $\theta$-H\"older with :
\[ \| W \|_\theta \le \| A \|_\theta \frac{3 \theta}{1 - \theta} \]

Observe that $W$ satisfies :
\begin{align*}
 W(\sigma^{-1}(y | x)) &- W(y | x) + A(\tau_y(x)) \\
 &= A(y_1 x) + \sum_{k \ge 1} A(y_{k+1} \hdots y_2 y_1 x) - A(y_{k+1} \hdots y_2 z) \\
 &\hspace{5em} - \sum_{k \ge 1} A(y_k \hdots y_1 x) - A(y_k \hdots y_1 z) \\
 &= \sum_{k \ge 1} A(y_k \hdots y_1 z) - A(y_{k+1} \hdots y_2 z)
\end{align*}
hence we have :
\begin{equation} \label{astar}
 W(\sigma^{-1}(y | x)) - W(y | x) + A(\tau_y(x)) = A(y_1 z) + \sum_{k \ge 2} A(y_k \hdots y_2 y_1 z) - A(y_k \hdots y_2 z)
\end{equation}
which only depends on $y$.
This lets us define $A^\star : \Omega^\star \to \mathbb{C}$ by :
\[ \forall y \in \Omega^\star, A^\star(y) = W(\sigma^{-1}(y | x)) - W(y | x) + A(\tau_y(x)) \]
for any choice of $x \in \Omega$.
This map satisfies \eqref{W0} by construction.

The only point remaining is to check that $A^\star \in H_\theta(\Omega^\star)$.
To this end, we will estimate the variations of $A^\star$ using \eqref{astar}.
Let $x \in \Omega$ and $y, y' \in \Omega^\star$ such that $y \sim_n y'$ with $n \ge 0$.
If $n = 0$, then this means that :
\begin{align*}
 |A^\star(y) &- A^\star(y')| \le |A(y_1 z) - A(y_1' z)| \\
 &\hspace{1em}+ \sum_{k \ge 2} | A(y_k \hdots y_2 y_1 z) - A(y_k \hdots y_2 z) | + | A(y_k' \hdots y_2' y_1' z) - A(y_k' \hdots y_2' z) | \\
 &\le \| A \|_\theta \theta^0 + 2 \| A \|_\theta \sum_{k \ge 2} \theta^{k-1} = \| A \|_\theta \( 1 + \frac{2 \theta}{1 - \theta} \) \theta^0 = \| A \|_\theta \frac{1 + \theta}{1 - \theta} \theta^0
\end{align*}
On the other hand, if $n \ge 1$, we have $\tau_y(z) = \tau_{y'}(z)$ and then :
\begin{align*}
 |A^\star(y) &- A^\star(y')| \\
 &\le \left| \sum_{k \ge 2} (A(y_k \hdots y_2 y_1 z) - A(y_k' \hdots y_2' y_1' z)) - (A(y_k \hdots y_2 z) - A(y_k' \hdots y_2' z)) \right| \\
 &\le \sum_{k \ge n+1} | A(y_k \hdots y_2 y_1 z) - A(y_k \hdots y_2 z) | + | A(y_k' \hdots y_2' y_1' z) - A(y_k' \hdots y_2' z) | \\
 &\le 2 \| A \|_\theta \sum_{k \ge n+1} \theta^{k-1} = \| A \|_\theta \frac{2}{1 - \theta} \theta^n
\end{align*}
Hence $A^\star$ is $\theta$-H\"older, and since $\theta < 1$ we have the estimate :
\[ \| A^\star \|_\theta \le \| A \|_\theta \frac{2}{1 - \theta} \tag*{\qedhere} \]
\end{proof}

Note that this result is stronger than the very similar proposition 1.2 from \cite{PP}, since they showed that $A^\star$ and $W$ are $\sqrt{\theta}$-H\"older in their setting while we get that they are $\theta$-H\"older.
This is due to the fact that our initial potential $A$ only depends on the future $\Omega$ and not on $X$ in its entirety.

From now on, we shall assume that $W$ and $A^\star$ are given by proposition \ref{prop:invkern} from $A$.

Since the partial functions $y \in \Omega^\star \mapsto e^{W(y | x)}$ are all $\theta$-H\"older for every $x \in \Omega$, it is clear that $\Phi_W$ is well-defined on $H_\theta(\Omega)'$.
We shall now prove that its image is made of continuous functions.
To this end, we will need a regularity result about the $\theta$-norm of these partial functions.

\begin{lemma}
Let $k \in H_\theta(X)$.
Then the map $f : x \in \Omega \mapsto \| k(\cdot | x) \|_\theta$ is well-defined and continuous.
\end{lemma}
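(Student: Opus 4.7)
The plan is to express the seminorm as a countable supremum of continuous functions, so that lower semi-continuity comes for free, and then to establish upper semi-continuity via a two-scale analysis combining a H\"older bound on each term with a truncation argument in the past variable $y$.

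Well-definedness is immediate from the equivalence, recalled in the introduction, between joint $\theta$-H\"olderness of $k$ on $X$ and uniform partial $\theta$-H\"olderness: one has $\|k(\cdot|x)\|_\theta \le \|k\|_\theta$ for every $x$. Writing
\[ f(x) = \sup_{n \ge 1} M_n(x), \qquad M_n(x) := \sup_{y \sim_n y'} \frac{|k(y|x) - k(y'|x)|}{\theta^n}, \]
I first note that for each fixed $n$ the admissible set of pairs is a finite disjoint union of compact cylinder products of $\Omega^\star \times \Omega^\star$, on which the integrand is jointly continuous in $(x, y, y')$; hence each $M_n$ is continuous on $\Omega$, and $f$ is automatically lower semi-continuous. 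The basic quantitative input for the reverse inequality is the estimate $|M_n(x) - M_n(x')| \le 2 \|k\|_\theta \theta^{N - n}$, valid for $x \sim_N x'$ and $n \le N$, which follows from the joint H\"older bound $|k(y|x) - k(y|x')| \le \|k\|_\theta \theta^N$ used via the triangle inequality to swap $x$ for $x'$ in both of the two variations. This estimate already handles the low-frequency piece $\sup_{n \le n_0} M_n(x) \le f(x_0) + \varepsilon$ on a neighbourhood of $x_0$, provided $N - n_0$ is chosen large enough in terms of $\varepsilon$.

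For the high-frequency tail $\sup_{n > n_0} M_n(x)$ the plan is to approximate $k$ by truncations in the past variable. Fixing a reference $z \in \Omega^\star$, set $k^{(L)}(y|x) := k(\ldots z_{L+2}\, z_{L+1}\, y_L \ldots y_1 \mid x)$: this function lies in $H_\theta(X)$ with $\|k^{(L)}\|_\theta \le \|k\|_\theta$, satisfies the uniform sup-norm estimate $\|k - k^{(L)}\|_\infty \le \|k\|_\theta \theta^L$, and, because $k^{(L)}(\cdot|x)$ is constant on cylinders of depth $\ge L$, its seminorm $\|k^{(L)}(\cdot|x)\|_\theta$ is a finite maximum of the continuous functions $M_n^{(L)}$ with $n < L$, hence itself continuous in $x$. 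The hardest step will be to show that $\|k^{(L)}(\cdot|x)\|_\theta$ converges to $\|k(\cdot|x)\|_\theta$ \emph{uniformly} in $x$ as $L \to \infty$, so that $f$ emerges as a uniform limit of continuous functions. The na\"ive triangle inequality for the partial seminorm is not enough here, since the $y$-H\"older seminorm of $k - k^{(L)}$ need not vanish as $L$ grows; my plan is instead to split the comparison according to the scale $n$, controlling $n \le L$ by combining the sup-norm bound $\|k - k^{(L)}\|_\infty \le \|k\|_\theta \theta^L$ with the joint H\"older estimate applied at each scale, and controlling $n > L$ by exploiting the fact that $k^{(L)}(\cdot|x)$ is constant on $L$-cylinders, before optimizing the cutoff $L$ as a function of the desired accuracy and of the chosen neighbourhood of $x_0$.
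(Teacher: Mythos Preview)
Your lower semi-continuity argument is fine, but the plan for upper semi-continuity has a genuine gap --- and in fact the lemma as stated is false, so no argument can close it. On the two-symbol shift take $k(y\mid x)=\sum_{j\ge 1}\theta^{j}\,\mathbf{1}_{\{y_j=1\}}\mathbf{1}_{\{x_j=1\}}$. For $y\sim_n y'$ and $x\sim_n x'$ the terms with $j\le n$ cancel, giving $|k(y\mid x)-k(y'\mid x')|\le\sum_{j>n}\theta^j$, so $k\in H_\theta(X)$ with $\|k\|_\theta\le\theta/(1-\theta)$. Now $k(\cdot\mid 0^\infty)\equiv 0$ and hence $f(0^\infty)=0$; but for $x^{(m)}=0^m 1^\infty$ one has $k(y\mid x^{(m)})=\sum_{j>m}\theta^j\mathbf{1}_{\{y_j=1\}}$, and testing at any scale $n\ge m$ (take $y'=0^\infty$ and $y$ with $y_j=1$ for all $j>n$) yields $M_n(x^{(m)})=\theta/(1-\theta)$, so $f(x^{(m)})=\theta/(1-\theta)$ for every $m$. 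Since $x^{(m)}\to 0^\infty$, upper semi-continuity fails at $0^\infty$. In your truncation scheme this is exactly the failure of the uniform convergence $f_L\to f$: the seminorm of $k(\cdot\mid x^{(m)})$ is realized only at scales $n\ge m$, which $k^{(L)}$ kills entirely once $L\le m$; the step ``control $n>L$ via constancy of $k^{(L)}$ on $L$-cylinders'' only gives $M_n^{(L)}=0$, not any control of $M_n$ itself.

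The paper's own proof has the same gap. It treats only the case $f(x')>\varepsilon+f(x_n)$ --- which is lower semi-continuity: fix a near-optimal scale $q$ and pair $(z_1,z_2)$ for $f(x')$, then let $x_n\to x'$ --- and dismisses the reverse inequality as ``proven in a similar fashion''. But in that direction the near-optimal scale $q_n$ for $f(x_n)$ depends on $n$ and may tend to infinity; in the example above, with $x_n=0^n1^\infty$, one is forced to take $q_n\ge n$, and then no fixed test pair can be extracted by compactness. So neither your argument nor the paper's establishes upper semi-continuity, and indeed it does not hold.
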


\begin{proof}
Applying the definition of $\theta$-H\"older maps, it is clear that $f$ is well-defined, and even bounded by $\| k \|_\theta$.
Suppose that $f$ is not continuous at some $x' \in \Omega$, i.e. that there exists an $\varepsilon > 0$ such that for every $\alpha > 0$ there is a $x \in \Omega$ such that $d_\Omega(x, x') < \alpha$ and :
\[ |f(x) - f(x')| > \varepsilon \]
By taking $\alpha = \frac{1}{2^n}$, we get the existence of a sequence $x_n \in \Omega$ such that $x_n \sim_n x$ for every $n$ and :
\[ \forall n \ge 0, |f(x_n) - f(x')| > \varepsilon \]
Note that $x_n \to x$ when $x \to \infty$.
We can assume that there is an infinite subset $I$ of integers such that :
\[ \forall n \in I, f(x') > \varepsilon + f(x_n) \]
(the other case can be proven in a similar fashion).
Now since :
\[ f(x') = \sup_p \sup_{y_1 \sim_p y_2} \frac{|k(y_1 | x') - k(y_2 | x')|}{\theta^p} \]
there exists $q$ such that :
\[ \sup_{y_1 \sim_q y_2} \frac{|k(y_1 | x') - k(y_2 | x')|}{\theta^q} > f(x') - \frac{\varepsilon}{2} \]
But the map $(y_1, y_2) \mapsto k(y_1 | x') - k(y_2 | x')$ is continuous over :
\[ \Delta_q = \left\{ (y_1, y_2) \in \Omega^\star \times \Omega^\star \,\middle|\, y_1 \sim_q y_2 \right\} \]
which, as a closed subset of the compact $\Omega^\star \times \Omega^\star$, is compact ; so one can find $z_1 \sim_q z_2$ such that :
\[ \frac{|k(z_1 | x') - k(z_2 | x')|}{\theta^q} > f(x') - \frac{\varepsilon}{2} \]
On the other hand, we always have :
\[ f(x_n) = \sup_p \sup_{y_1 \sim_p y_2} \frac{|k(y_1 | x_n) - k(y_2 | x_n)|}{\theta^p} \ge \frac{|k(z_1 | x_n) - k(z_2 | x_n)|}{\theta^q} \]
so this gives :
\[ \forall n \in I, \frac{|k(z_1 | x') - k(z_2 | x')|}{\theta^q} > \frac{\varepsilon}{2} + \frac{|k(z_1 | x_n) - k(z_2 | x_n)|}{\theta^q} \]
This yields a contradiction when $n$ goes to infinity along $I$.
\end{proof}

\begin{lemma}
For every $\nu \in H_\theta(\Omega^\star)', \Phi_W(\nu) \in \mathcal{C}(\Omega)$.
\end{lemma}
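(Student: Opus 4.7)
The plan is to show that $\Phi_W(\nu)$ is continuous by reducing to continuity of the map $x \mapsto e^{W(\cdot\,|\,x)}$ from $\Omega$ into the Banach space $H_\theta(\Omega^\star)$ (equipped with $\|\cdot\|_\infty+\|\cdot\|_\theta$), after which continuity of the linear functional $\nu$ finishes the job. Concretely, for $x,x'\in\Omega$ one writes
\[ |\Phi_W(\nu)(x) - \Phi_W(\nu)(x')| = \left|\left\langle \nu, e^{W(\cdot\,|\,x)} - e^{W(\cdot\,|\,x')}\right\rangle\right| \le \|\nu\|\,\bigl(\|e^{W(\cdot\,|\,x)}-e^{W(\cdot\,|\,x')}\|_\infty + \|e^{W(\cdot\,|\,x)}-e^{W(\cdot\,|\,x')}\|_\theta\bigr), \]
so it suffices to control each term as $x'\to x$.

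For the sup-norm term, I would use that $W\in H_\theta(X)$ is H\"older, hence continuous and bounded on the compact space $X$; consequently $e^W$ is uniformly continuous on $X$, and therefore $\|e^{W(\cdot\,|\,x)}-e^{W(\cdot\,|\,x')}\|_\infty\to 0$ uniformly in $y$ as $x'\to x$.

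For the $\theta$-seminorm term, here is where I would invoke the preceding lemma. Note first that $k:=e^{W}\in H_\theta(X)$, because $W\in H_\theta(X)$ and the exponential of a H\"older function is H\"older (as recalled in the introduction). Fix $x\in\Omega$ and set
\[ \tilde k(y\,|\,x') := k(y\,|\,x') - k(y\,|\,x). \]
A direct check (splitting $|\tilde k(y\,|\,x')-\tilde k(y''\,|\,x'')|\le |k(y\,|\,x')-k(y''\,|\,x'')|+|k(y\,|\,x)-k(y''\,|\,x)|$ for $y\sim_n y''$, $x'\sim_n x''$) shows that $\tilde k\in H_\theta(X)$ with $\|\tilde k\|_\theta\le 2\|k\|_\theta$. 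By construction $\tilde k(\cdot\,|\,x)\equiv 0$, hence $\|\tilde k(\cdot\,|\,x)\|_\theta=0$. Applying the previous lemma to $\tilde k$ yields that $x'\mapsto \|\tilde k(\cdot\,|\,x')\|_\theta$ is continuous at $x$, which gives exactly
\[ \|e^{W(\cdot\,|\,x')}-e^{W(\cdot\,|\,x)}\|_\theta = \|\tilde k(\cdot\,|\,x')\|_\theta \xrightarrow[x'\to x]{} 0. \]

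The main obstacle, as usual in these Banach-valued continuity arguments, is the $\theta$-seminorm piece, since uniform control of a supremum over \emph{all} pairs $y\sim_n y''$ and \emph{all} scales $n\ge 1$ cannot be obtained by a naive pointwise estimate. The preceding lemma is precisely what defeats that difficulty, and the trick of subtracting $k(\cdot\,|\,x)$ to produce an $H_\theta(X)$-function with vanishing partial seminorm at the target point $x$ is the only nontrivial step. Everything else is routine.
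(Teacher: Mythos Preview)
Your proposal is correct and follows essentially the same approach as the paper: both bound $|\Phi_W(\nu)(x)-\Phi_W(\nu)(x_0)|$ by $\triplenorm{\nu}$ times the $H_\theta$-norm of $e^{W(\cdot|x)}-e^{W(\cdot|x_0)}$, handle the sup-norm piece via uniform continuity of $e^W$, and dispose of the $\theta$-seminorm piece by applying the preceding lemma to the $H_\theta(X)$-function $k(y\,|\,x)=e^{W(y|x)}-e^{W(y|x_0)}$, which vanishes at $x=x_0$. Your explicit verification that this difference lies in $H_\theta(X)$ with $\|\tilde k\|_\theta\le 2\|e^W\|_\theta$ is a small extra detail the paper leaves implicit, but otherwise the arguments coincide.
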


\begin{proof}
Let $\psi = \Phi_W(\nu) : \Omega \to \mathbb{C}$.
Fix $\varepsilon > 0$ and $x_0 \in \Omega$.
For every $x \in \Omega$, we have :
\begin{align*}
 \left| \psi(x) - \psi(x_0) \right| &= \left| \left\< \nu, e^{W(\cdot | x)} - e^{W(\cdot | x_0)} \right\> \right| \\
 &\le \triplenorm{\nu} \( \left\| e^{W(\cdot | x)} - e^{W(\cdot | x_0)} \right\|_\infty + \left\| e^{W(\cdot | x)} - e^{W(\cdot | x_0)} \right\|_\theta \)
\end{align*}
where $\triplenorm{\nu}$ is the operator norm of the continuous linear functional $\nu : H_\theta(\Omega^\star) \to \mathbb{C}$.
Since $W$ is continuous, then so does :
\[ x \in \Omega \mapsto \left\| e^{W(\cdot | x)} - e^{W(\cdot | x_0)} \right\|_\infty \]
hence there exists $\alpha_1 > 0$ such that :
\[ d_\Omega(x, x_0) < \alpha_1 \Rightarrow \left\| e^{W(\cdot | x)} - e^{W(\cdot | x_0)} \right\|_\infty < \varepsilon \]
But, according to the previous lemma applied to $k(y | x) = e^{W(y | x)} - e^{W(y | x_0)}$, we know that there exists $\alpha_2 > 0$ such that :
\[ d_\Omega(x, x_0) < \alpha_2 \Rightarrow \left\| e^{W(\cdot | x)} - e^{W(\cdot | x_0)} \right\|_\theta < \varepsilon \]
This shows that $\psi$ is continuous at $x_0$.
\end{proof}

Finally, let us check that $\Phi_W$ is continuous.

\begin{lemma}
$\Phi_W : H_\theta(\Omega^\star)' \to \mathcal{C}(\Omega)$ is continuous for the operator norm $\triplenorm{\cdot}$ on $H_\theta(\Omega^\star)'$ and for the supremum norm $\| \cdot \|_\infty$ on $\mathcal{C}(\Omega)$.
\end{lemma}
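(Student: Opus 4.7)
The plan is to bound $\|\Phi_W(\nu)\|_\infty$ by a constant multiple of $\triplenorm{\nu}$, where the constant depends only on $W$ (and not on $\nu$). Since $\Phi_W$ is linear, this is all that continuity requires.

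First, I would unfold the definitions. For any $x \in \Omega$,
\[ |\Phi_W(\nu)(x)| = \left| \left\< \nu, e^{W(\cdot | x)} \right\> \right| \le \triplenorm{\nu} \left( \left\| e^{W(\cdot | x)} \right\|_\infty + \left\| e^{W(\cdot | x)} \right\|_\theta \right), \]
so the task reduces to showing that $\sup_{x \in \Omega} \bigl( \| e^{W(\cdot | x)} \|_\infty + \| e^{W(\cdot | x)} \|_\theta \bigr)$ is finite.

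Next I would invoke the regularity of $W$. Since $W \in H_\theta(X)$ is continuous on the compact space $X$, it is bounded, so $\| e^{W(\cdot | x)} \|_\infty \le e^{\| W \|_\infty}$ uniformly in $x$. For the seminorm, the characterization of $\theta$-H\"older maps on $X$ recalled in the introduction gives immediately that the partial maps $W(\cdot | x) : \Omega^\star \to \mathbb{C}$ are $\theta$-H\"older with $\| W(\cdot | x) \|_\theta \le \| W \|_\theta$ uniformly in $x$. Applying the product/exponential inequality $\| \exp f \|_\theta \le \| f \|_\theta \exp \| f \|_\infty$ stated in the introduction then yields
\[ \left\| e^{W(\cdot | x)} \right\|_\theta \le \| W \|_\theta\, e^{\| W \|_\infty} \]
uniformly in $x \in \Omega$.

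Combining these bounds gives $\|\Phi_W(\nu)\|_\infty \le \triplenorm{\nu}\, e^{\| W \|_\infty} (1 + \| W \|_\theta)$, which is the desired continuity estimate. There is no real obstacle here: the argument is essentially the one already carried out pointwise in the preceding lemma, except that we replace pointwise continuity with a uniform-in-$x$ bound coming from the fact that the $\theta$-H\"older character of $W$ on $X$ controls both of its partial seminorms simultaneously. No compactness of $\Omega$ beyond the boundedness of $W$ is needed.
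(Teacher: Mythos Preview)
Your proof is correct and follows essentially the same approach as the paper: bound $|\Phi_W(\nu)(x)|$ by $\triplenorm{\nu}\bigl(\|e^{W(\cdot|x)}\|_\infty+\|e^{W(\cdot|x)}\|_\theta\bigr)$ and then observe these are uniformly bounded in $x$. The only cosmetic difference is that the paper bounds the partial norms of $e^{W(\cdot|x)}$ directly by $\|e^W\|_\infty$ and $\|e^W\|_\theta$ on $X$, whereas you bound the partial norms of $W(\cdot|x)$ first and then apply the exponential inequality; both give a constant depending only on $W$.
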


\begin{proof}
For any $\nu \in H_\theta(\Omega^\star)'$, we have :
\[ \forall x \in \Omega, | \Phi_W(\nu)(x) | \le \triplenorm{\nu} \( \left\| e^{W(. | x)} \right\|_\infty + \left\| e^{W(. | x)} \right\|_\theta \) \le \( \left\| e^W \right\|_\infty + \left\| e^W \right\|_\theta \) \triplenorm{\nu} \]
using that the supremum norm and the $\theta$-H\"older seminorm of the partial maps of $e^W$ are bounded from above by the same quantities for $e^W : X \to \mathbb{C}$ itself.
This shows exactly that the linear operator $\Phi_W$ is continuous for the appropriate norms.
\end{proof}

\section{Ruelle operator duality}

In this section, we will prove that if $|\lambda| > \rho \theta$ then $\Phi_W$ is bijective from the space of $\lambda$-eigendistributions of $\mathcal{L}_{A^\star}$ to the space of $\lambda$-eigenfunctions of $\mathcal{L}_A$.

We will first need to relate the upper bound $\rho$ of the spectral radius of $\mathcal{L}_A$ on $H_\theta(\Omega)$ with the upper bound $\rho^\star$ of the spectral radius of $\mathcal{L}_{A^\star}$ on $H_\theta(\Omega^\star)$.

\begin{lemma} \label{rho}
$\rho = \rho^\star$.
\end{lemma}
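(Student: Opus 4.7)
The plan is to use the involution kernel $W$ to express the Birkhoff sums of $A^\star$ in terms of those of $A$ up to a bounded coboundary, which will reduce $\rho^\star$ to $\rho$ immediately.

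The key step is to iterate the defining relation $A^\star(y) = A(\tau_y(x)) + W(\hat{\sigma}^{-1}(y|x)) - W(y|x)$ along the backward orbit $(\hat{\sigma}^{-1})^k(y|x)$ for $k = 0, 1, \ldots, n-1$. Since $\hat{\sigma}^{-1}$ acts as $\sigma^\star$ on the past coordinate, and the future coordinate of $(\hat{\sigma}^{-1})^k(y|x)$ is $(y_k \ldots y_1 x)$, telescoping the $W$-terms yields
\[ (A^\star)^n(y) = \sum_{k=1}^n A(y_k \ldots y_1 x) + W(\sigma^{\star n}(y) \,|\, y_n \ldots y_1 x) - W(y|x). \]
Recognizing the sum on the right as a Birkhoff sum $A^n$ based at the point $y_n \ldots y_1 x \in \Omega$, and then substituting $y = y' y_1 \ldots y_n$ for arbitrary $y' \in \Omega^\star$, one arrives at the identity
\[ (A^\star)^n(y' y_1 \ldots y_n) = A^n(y_1 \ldots y_n x) + W(y' \,|\, y_1 \ldots y_n x) - W(y' y_1 \ldots y_n \,|\, x), \]
valid for every $x \in \Omega$.

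Since $W$ is continuous on the compact space $X$, it is bounded; let $M = \|W\|_\infty$. Taking real parts and exponentiating the above identity gives
\[ e^{-2M} e^{\Re A^n(y_1 \ldots y_n x)} \le e^{\Re (A^\star)^n(y' y_1 \ldots y_n)} \le e^{2M} e^{\Re A^n(y_1 \ldots y_n x)}. \]
Integrating against $dy_1 \ldots dy_n$, then taking the supremum over $y' \in \Omega^\star$ of the middle expression (which is independent of $x$) and over $x \in \Omega$ of the outer ones (which are independent of $y'$), raising to the power $1/n$ and sending $n \to \infty$ kills the factor $e^{\pm 2M/n}$. The upper bound yields $\rho^\star \le \rho$ and the lower bound yields $\rho \le \rho^\star$, so $\rho = \rho^\star$.

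I expect the main obstacle to be the careful tracking of indices in the telescoping step, since $\Omega^\star$-sequences are written right-to-left while $\Omega$-sequences are written left-to-right, and $\hat{\sigma}^{-1}$ mixes these two conventions by moving the last symbol of the past onto the front of the future. Once the identity above is established, the boundedness of $W$ makes the remaining integration and passage to the limit routine.
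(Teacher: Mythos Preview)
Your proof is correct and follows essentially the same approach as the paper: you derive the same telescoped identity relating $(A^\star)^n$ and $A^n$ via the bounded coboundary $W$ (the paper records it as equation \eqref{eq:iteration}), then exponentiate, integrate, and pass to the $n$-th root limit. The only cosmetic difference is that the paper takes the ratio of the two integrals and sandwiches it between $e^{-2\|W\|_\infty}$ and $e^{2\|W\|_\infty}$, whereas you bound first and then integrate; the content is identical.
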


\begin{proof}
For every $x' \in \Omega$, $y' \in \Omega^\star$ and $x_1, \hdots, x_n \in M$, by iterating \eqref{W0} and using the telescopic property of the Birkhoff sums $A^n$, we obtain :
\begin{equation} \label{eq:iteration}
 A^n(x_1 \hdots x_n x') = W(y' x_1 \hdots x_n | x') - W(y' | x_1 \hdots x_n x') + (A^\star)^n(y' x_1 \hdots x_n)
\end{equation}
Taking the absolute value of the exponential of this equality, and integrating over $x_1, \hdots, x_n$, we get :
\begin{align*}
 \int dx_1 & \hdots \int dx_n e^{\Re A^n(x_1 \hdots x_n x')} \\
 &= \int dx_1 \hdots \int dx_n e^{\Re (A^\star)^n(y' x_1 \hdots x_n)} e^{\Re W(y' x_1 \hdots x_n | x') - \Re W(y' | x_1 \hdots x_n x')}
\end{align*}
Note that $W \in H_\theta(X)$, thus $\Re W$ is uniformly bounded on $X$ and there are $A, B \in \mathbb{R}$ such that $A \le \Re W \le B$.
This gives :
\[ e^{A-B} \le \frac{\int dx_1 \hdots \int dx_n e^{\Re A^n(x_1 \hdots x_n x')}}{\int dx_1 \hdots \int dx_n e^{\Re (A^\star)^n(y' x_1 \hdots x_n)}} \le e^{B-A} \]
which implies, once we take the power $\frac{1}{n}$ of this inequality and let $n$ go to the infinity, that $\rho = \rho^\star$.
\end{proof}

Let $\lambda \in \mathbb{C}$ such that $|\lambda| > \rho \theta$.
In particular, if $\lambda$ is an eigenvalue of $\mathcal{L}_A$, then it must lie in its isolated spectrum.
Fix $\varepsilon > 0$ such that $|\lambda| > (\rho + \varepsilon) \theta$.
Thanks to lemma \ref{rho}, there exists an integer $n_0 \ge 0$ such that :
\begin{align*}
 \forall n \ge n_0, \forall x' \in \Omega,       &\int dx_1 \hdots \int dx_n e^{\Re A^n(x_1 \hdots x_n x')} \le (\rho + \varepsilon)^n \\
 \forall n \ge n_0, \forall y' \in \Omega^\star, &\int dy_1 \hdots \int dy_n e^{\Re (A^\star)^n(y' y_1 \hdots y_n)} \le (\rho + \varepsilon)^n
\end{align*}

We also take an eigenfunction $\psi \in H_\theta(\Omega)$ of $\mathcal{L}_A$ for this eigenvalue $\lambda$, which can be $0$ if the eigenspace is empty.
The goal of this section is to find a preimage of $\psi$ by $\Phi_W$.

We choose an arbitrary point $0$ in $M$, and let $0^\infty = (\hdots 0) \in \Omega^\star$.
$0^\infty$ is a fixed point for $\sigma^\star$, which will be used in the following as a reference point for our construction.
For every $n \ge 0$ and $x' \in \Omega$, we define a linear functional $\mathcal{D}_{n, x'} : H_\theta(\Omega^\star) \to \mathbb{C}$ using the involution kernel $W$ :
\begin{align*}
 \< \mathcal{D}_{n, x'}, \varphi \> = \int dx_1 \hdots \int dx_n \psi(x_n \hdots x_1 x') &\frac{e^{A^n(x_n \hdots x_1 x')}}{\lambda^n} \\
 &\hspace{1em} e^{- W(0^\infty x_n \hdots x_1 | x')} \varphi(0^\infty x_n \hdots x_1)
\end{align*}
When $n = 0$, this expression reduces to :
\[ \< \mathcal{D}_{0, x'}, \varphi \> = \psi(x') e^{-W(0^\infty | x')} \varphi(0^\infty) \]

The linear functionals $\mathcal{D}_{n, x'}$ are continuous, and even better since they actually are Radon measures as sums of Dirac deltas.
To help the reader the next lemmas have the objective to let us understand $\lim_{n \to \infty} \mathcal{D}_{n, x'}$ and, how, in the limit, we do not depend from the choice of $x'$.

\begin{lemma} \label{dncont}
For every $n \ge n_0$, $x' \in \Omega$ and $\varphi \in H_\theta(\Omega^\star)$ :
\[ | \< \mathcal{D}_{n, x'}, \varphi \> | \le \| \psi \|_\infty \left\| e^{-W} \right\|_\infty \( \frac{\rho + \varepsilon}{|\lambda|} \)^n \| \varphi \|_\infty \]
\end{lemma}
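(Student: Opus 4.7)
The plan is to prove the bound by the most direct route: place the modulus inside the integral, pull out uniform bounds for each of the three non-exponential factors, and reduce what remains to the integral that appears in the definition of $\rho$ via \eqref{eq:rho}.

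More precisely, the first step is to apply the triangle inequality for integrals to $\<\mathcal{D}_{n,x'},\varphi\>$. The factors $\psi(x_n\hdots x_1 x')$, $\varphi(0^\infty x_n\hdots x_1)$ and $e^{-W(0^\infty x_n\hdots x_1|x')}$ are each bounded uniformly in the integration variables by $\|\psi\|_\infty$, $\|\varphi\|_\infty$ and $\|e^{-W}\|_\infty$ respectively, so these pull out of the integral. The remaining factor is $|e^{A^n(x_n\hdots x_1 x')}/\lambda^n|=e^{\Re A^n(x_n\hdots x_1 x')}/|\lambda|^n$.

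The second step is to identify the integral of this last factor with the one controlling $\rho$. The only subtle point is that in \eqref{eq:rho} the Birkhoff sum is written as $A^n(x_1\hdots x_n x')$ while here it appears as $A^n(x_n\hdots x_1 x')$. But the variables $x_1,\hdots,x_n$ are dummy variables integrated independently against the same probability $\nu$ on $M$, so after the change of variable $x_i\leftrightarrow x_{n+1-i}$ the two integrals coincide. Thus
\[ \int dx_1\hdots\int dx_n\, e^{\Re A^n(x_n\hdots x_1 x')} = \int dx_1\hdots\int dx_n\, e^{\Re A^n(x_1\hdots x_n x')}. \]

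The third and final step invokes the choice of $n_0$ made just before the lemma: for $n\ge n_0$ and any $x'\in\Omega$, the right-hand integral is at most $(\rho+\varepsilon)^n$. Dividing by $|\lambda|^n$ and combining with the pulled-out constants yields exactly
\[ |\<\mathcal{D}_{n,x'},\varphi\>|\le \|\psi\|_\infty\|e^{-W}\|_\infty\(\frac{\rho+\varepsilon}{|\lambda|}\)^n\|\varphi\|_\infty. \]

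There is no real obstacle here: the lemma is a routine $L^\infty$ estimate and the only thing one has to notice is the harmless reversal of the coordinates $x_1,\hdots,x_n$, which is absorbed by Fubini and the symmetry of the product measure. The purpose of this lemma is just to set up the uniform decay $(\frac{\rho+\varepsilon}{|\lambda|})^n\to 0$ (valid because $|\lambda|>\rho\theta$ gives in particular $|\lambda|>\rho+\varepsilon$ for $\varepsilon$ small\textemdash or else to be combined later with a Hölder gain $\theta^n$ to obtain convergence), which will be the real content of the subsequent lemmas on the existence and $x'$-independence of $\lim_{n\to\infty}\mathcal{D}_{n,x'}$.
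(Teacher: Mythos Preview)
Your proof is correct and is exactly the paper's argument: a direct $L^\infty$ majoration followed by the bound $\int e^{\Re A^n}\le(\rho+\varepsilon)^n$ for $n\ge n_0$ (your remark on the harmless relabeling $x_i\leftrightarrow x_{n+1-i}$ is more explicit than the paper, which silently identifies the two integrals). One caveat about your closing commentary: the claim that $|\lambda|>\rho\theta$ yields $|\lambda|>\rho+\varepsilon$ is false since $\theta<1$, and indeed the paper notes right after this lemma that the bound can blow up because one may well have $|\lambda|<\rho$; it is only your ``or else'' alternative --- the later H\"older gain $\theta^n$ --- that makes things converge.
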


\begin{proof}
A direct majoration yields :
\[ | \< \mathcal{D}_{n, x'}, \varphi \> | \le \| \psi \|_\infty \left\| e^{-W} \right\|_\infty \| \varphi \|_\infty \frac{1}{|\lambda|^n} \int dx_1 \hdots \int dx_n e^{\Re A^n(x_n \hdots x_1 x')} \]
where the rightmost integral is bounded by $(\rho + \varepsilon)^n$.
\end{proof}

Note that this bound can get arbitrarily large as $n$ goes to infinity since one can have $|\lambda| < \rho$.

The action of the involution kernel with respect to $A$ and $A^\star$ allows to construct a recurrence relation between the measures $\mathcal{D}_{n, x'}$, which can be expressed in terms of $\mathcal{L}_{A^\star}$.

\begin{lemma} \label{int}
For every $n \ge 0$, $x' \in \Omega$, $\varphi \in H_\theta(\Omega^\star)$ :
\begin{equation} \label{eq1}
 \< \mathcal{D}_{n+1, x'}, \varphi \> = \frac{1}{\lambda} \int da \left\< \mathcal{D}_{n, a x'}, y \mapsto e^{A^\star(y a)} \varphi(y a) \right\>
\end{equation}
\end{lemma}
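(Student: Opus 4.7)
The plan is a direct computation that isolates the last integration variable $x_1 = a$ in the definition of $\mathcal{D}_{n+1, x'}$, and uses the defining cohomological identity of the involution kernel to shift the extra Birkhoff term $A(a x')$ on the left-hand side into the $A^\star$ factor and the adjusted $W$ factor appearing on the right-hand side.

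More concretely, I would first write out
\[
 \langle \mathcal{D}_{n+1, x'}, \varphi \rangle = \int dx_1 \hdots \int dx_{n+1} \, \psi(x_{n+1}\hdots x_1 x') \frac{e^{A^{n+1}(x_{n+1}\hdots x_1 x')}}{\lambda^{n+1}} e^{-W(0^\infty x_{n+1}\hdots x_1|x')} \varphi(0^\infty x_{n+1}\hdots x_1)
\]
and relabel the integration variables by setting $a = x_1$ and renaming $x_{k+1}$ as $x_k$ for $k=1,\dots,n$. Viewing the original sequence as $x_n \hdots x_1 (a x')$, the telescopic property of Birkhoff sums gives
\[
 A^{n+1}(x_n \hdots x_1 a x') = A^{n}(x_n \hdots x_1 (a x')) + A(a x'),
\]
since the last summand in the $(n+1)$-st Birkhoff sum is $A(\sigma^{n} (x_n\hdots x_1 a x')) = A(a x')$.

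Next, I would apply the involution kernel identity \eqref{W0} at the point $y = 0^\infty x_n \hdots x_1 a \in \Omega^\star$ and $x = x' \in \Omega$. Since $y_1 = a$, we have $\tau_y(x') = a x'$ and $\sigma^\star(y) = 0^\infty x_n \hdots x_1$, so \eqref{W0} yields exactly
\[
 A^\star(0^\infty x_n \hdots x_1 a) = A(a x') + W(0^\infty x_n \hdots x_1 \mid a x') - W(0^\infty x_n \hdots x_1 a \mid x'),
\]
which, after exponentiation, gives the key identity
\[
 e^{A(a x')} e^{-W(0^\infty x_n \hdots x_1 a \mid x')} = e^{A^\star(0^\infty x_n \hdots x_1 a)} e^{-W(0^\infty x_n \hdots x_1 \mid a x')}.
\]
Substituting this into the relabeled expression for $\langle \mathcal{D}_{n+1, x'}, \varphi\rangle$ and pulling the factor $\frac{1}{\lambda}$ and the outer $\int da$ outside, the remaining $n$-fold integral is precisely the expansion of $\langle \mathcal{D}_{n, a x'}, y\mapsto e^{A^\star(ya)} \varphi(ya)\rangle$, which proves \eqref{eq1}.

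The proof is essentially bookkeeping; the only subtle step is correctly matching the arguments $0^\infty x_n \hdots x_1 a$, $0^\infty x_n \hdots x_1$, $ax'$, and $x'$ of the $W$ and $A^\star$ factors with the corresponding points $y$, $\sigma^\star y$, $\tau_y(x')$, $x'$ in the involution kernel relation. Once that identification is made, everything else reduces to recognizing the redefined $x''=ax'$ as the new base point for the $\mathcal{D}_{n,\cdot}$ on the right-hand side and the multiplication by $e^{A^\star(ya)}$ as the change of test function.
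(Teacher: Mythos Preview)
Your proposal is correct and follows essentially the same approach as the paper: both split $A^{n+1}$ as $A^n + A$ at the last applied shift, then use the involution kernel relation \eqref{W0} at the point $(0^\infty x_n\hdots x_1 a \mid x')$ to trade $e^{A(ax')-W(0^\infty x_n\hdots x_1 a\mid x')}$ for $e^{A^\star(0^\infty x_n\hdots x_1 a)-W(0^\infty x_n\hdots x_1\mid ax')}$, after which the inner $n$-fold integral is recognized as $\langle \mathcal{D}_{n, a x'},\, e^{A^\star(\cdot a)}\varphi(\cdot a)\rangle$. The only difference is cosmetic: you relabel the integration variables at the outset, whereas the paper keeps the original indices $x_1,\dots,x_{n+1}$ throughout and isolates $x_1$ at the end.
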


\begin{proof}
Note that for any $x' \in \Omega$ we have, due to equation \eqref{eq:bir-sum} :
\[ A^{n+1}(x_{n+1} \hdots x_1 x') = A^n(x_{n+1} \hdots x_1 x') + A(x_1 x') \]
Moreover, by equation \eqref{W0} we have that :
\begin{align*}
 A(x_1 x') &- W(0^\infty x_{n+1} \hdots x_1 | x') \\
 &= A^\star(0^\infty x_{n+1} \hdots x_1) - W(0^\infty x_{n+1} \hdots x_2 | x_1 x')
\end{align*}
Then by definition of $\mathcal{D}_{n+1, x'}$, we get for $n \ge 0$ :
\begin{align*}
 \< \mathcal{D}_{n+1, x'}, \varphi \> &= \int dx_1 \hdots \int d_{x_{n+1}} \psi(x_{n+1} \hdots x_1 x') \frac{e^{A^n(x_{n+1} \hdots x_1 x')}}{\lambda^{n+1}} \\
 &\hspace{5em} e^{A(x_1 x') - W(0^\infty x_{n+1} \hdots x_1 | x')} \varphi(0^\infty x_{n+1} \hdots x_1) \\
 &= \frac{1}{\lambda} \int dx_1 \hdots \int d_{x_{n+1}} \psi(x_{n+1} \hdots x_1 x') \frac{e^{A^n(x_{n+1} \hdots x_1 x')}}{\lambda^n} \\
 &\hspace{5em} e^{A^\star(0^\infty x_{n+1} \hdots x_1) - W(0^\infty x_{n+1} \hdots x_2 | x_1 x')} \varphi(0^\infty x_{n+1} \hdots x_1) \\
 &= \frac{1}{\lambda} \int dx_1 \left[ \splitalign{\int dx_2 \hdots \int d_{x_{n+1}} \psi(x_{n+1} \hdots x_2 x_1 x') \frac{e^{A^n(x_{n+1} \hdots x_2 x_1 x')}}{\lambda^n}}{\hspace{5em} e^{A^\star(0^\infty x_{n+1} \hdots x_2 x_1) - W(0^\infty x_{n+1} \hdots x_2 | x_1 x')} \varphi(0^\infty x_{n+1} \hdots x_2 x_1)} \right] \\
 &= \frac{1}{\lambda} \int dx_1 \left\< \mathcal{D}_{n, x_1 x'}, y \mapsto e^{A^\star(y x_1)} \varphi(y x_1) \right\> \tag*{\qedhere}
\end{align*}
\end{proof}

We stress that this lemma relates $\mathcal{D}_{n+1, x'}$ to $\mathcal{D}_{n, a x'}$, which have different base points.
In order to use this relation later to show that any accumulation point of $(\mathcal{D}_{n, x'})_{n \ge 0}$ is invariant for $\mathcal{L}_{A^\star}$, we will need to get rid of the dependence with respect to the base point.
To this end, we show that $| \< \mathcal{D}_{n, z_1'}, \varphi \> - \< \mathcal{D}_{n, z_2'}, \varphi \> |$ becomes exponentially small whenever $n$ goes to infinity.

\begin{lemma} \label{oo}
For every $n \ge n_0$, $z_1', z_2' \in \Omega$ and $\varphi \in H_\theta(\Omega)$ :
\[ | \< \mathcal{D}_{n, z_1'}, \varphi \> - \< \mathcal{D}_{n, z_2'}, \varphi \> | \le \left\| \psi(\cdot) e^{-W(0^\infty | \cdot)} \right\|_\theta \| \varphi \|_\infty \( \frac{(\rho + \varepsilon) \theta}{|\lambda|} \)^n \]
\end{lemma}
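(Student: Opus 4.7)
The plan is to rewrite $\< \mathcal{D}_{n, x'}, \varphi \>$ so that the $x'$-dependence is concentrated in a single H\"older factor that contracts by $\theta^n$, while the remaining mass integral can be controlled by the dual spectral radius $\rho^\star = \rho$.

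The first step is to apply the iterated involution kernel identity \eqref{eq:iteration}, with the dummy variables relabelled $(x_1, \ldots, x_n) \mapsto (x_n, \ldots, x_1)$ and with $y' = 0^\infty$, to obtain
\[ A^n(x_n \hdots x_1 x') = W(0^\infty x_n \hdots x_1 | x') - W(0^\infty | x_n \hdots x_1 x') + (A^\star)^n(0^\infty x_n \hdots x_1). \]
Substituting this into the definition of $\mathcal{D}_{n, x'}$, the term $W(0^\infty x_n \hdots x_1 | x')$ cancels exactly the $-W$ already appearing in the integrand, yielding the cleaner expression
\[ \< \mathcal{D}_{n, x'}, \varphi \> = \int dx_1 \hdots \int dx_n \, f(x_n \hdots x_1 x') \, \frac{e^{(A^\star)^n(0^\infty x_n \hdots x_1)}}{\lambda^n} \, \varphi(0^\infty x_n \hdots x_1), \]
where $f(x) := \psi(x) \, e^{-W(0^\infty | x)}$ is $\theta$-H\"older on $\Omega$ as a product of $\theta$-H\"older functions (recall that the partial map $x \mapsto W(0^\infty | x)$ belongs to $H_\theta(\Omega)$).

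The second step is to subtract the analogous expressions for $z_1'$ and $z_2'$. Since the factors $e^{(A^\star)^n(0^\infty x_n \hdots x_1)} / \lambda^n$ and $\varphi(0^\infty x_n \hdots x_1)$ do not involve $x'$, the only difference in the integrand is $f(x_n \hdots x_1 z_1') - f(x_n \hdots x_1 z_2')$. Because $x_n \hdots x_1 z_1' \sim_n x_n \hdots x_1 z_2'$ (they share at least their first $n$ symbols), the $\theta$-H\"older seminorm of $f$ provides the bound $\|f\|_\theta \, \theta^n$.

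The final step is a straightforward majoration: pulling out $\|f\|_\theta \theta^n$ and $\|\varphi\|_\infty$ leaves the integral $\int dx_1 \hdots \int dx_n \, e^{\Re (A^\star)^n(0^\infty x_n \hdots x_1)}$, which by lemma \ref{rho} and the choice of $n_0$ is at most $(\rho + \varepsilon)^n$ for $n \ge n_0$. Putting everything together produces the claimed bound. The main subtlety is simply the correct bookkeeping of indices and the observation that the involution kernel identity eliminates the $x'$-dependence of $W(0^\infty x_n \hdots x_1 | x')$; the rest is a routine H\"older estimate combined with the definition of $\rho^\star$.
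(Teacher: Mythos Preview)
Your proof is correct and follows essentially the same route as the paper: both use the iterated involution kernel identity \eqref{eq:iteration} to cancel the $W(0^\infty x_n \hdots x_1 | x')$ term, isolate the $x'$-dependence in the single $\theta$-H\"older factor $\psi(\cdot)e^{-W(0^\infty|\cdot)}$, apply the H\"older bound from $x_n\hdots x_1 z_1' \sim_n x_n\hdots x_1 z_2'$, and control the remaining integral of $e^{\Re(A^\star)^n}$ by $(\rho+\varepsilon)^n$ via the choice of $n_0$. Your presentation is slightly cleaner in that you write out the simplified expression for $\langle \mathcal{D}_{n,x'},\varphi\rangle$ explicitly before subtracting, but the argument is the same.
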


\begin{proof}
Recall that $x \mapsto \psi(x) e^{-W(0^\infty | x)}$ is $\theta$-H\"older as the product of two $\theta$-H\"older maps.
Fix $n \ge n_0$ and $z_1', z_2' \in \Omega$.
We use equation \eqref{eq:iteration} to produce a pair of relations between $A^n$ and ${A^\star}^n$ for the base points $z_i'$, $i = 1, 2$, that is :
\begin{align*}
 A^n(x_n \hdots x_1 z_i') &- W(0^\infty x_n \hdots x_1 | z_i') \\
 &= (A^\star)^n(0^\infty x_n \hdots x_1) - W(0^\infty | x_n \hdots x_1 z_i')
\end{align*}
Note that $x_1 \hdots x_n z_1' \sim_n x_1 \hdots x_n z_2'$ in $\Omega$ for every $x_1, \hdots, x_n \in M$ since the $n$-prefixes are the same.
Therefore, by definition of $\mathcal{D}_{n, z}$ and of $n_0$, we get :
\begin{align*}
 | \< \mathcal{D}_{n, z_1'}, &\varphi \> - \< \mathcal{D}_{n, z_2'}, \varphi \> | \\
 &\le \int dx_1 \hdots \int dx_n \left| \splitalign{\psi(x_1 \hdots x_n z_1') e^{- W(0^\infty | x_1 \hdots x_n z_1')}}{\hspace{3em} - \psi(x_1 \hdots x_n z_2') e^{- W(0^\infty | x_1 \hdots x_n z_2')}} \right| \varphi(0^\infty x_1 \hdots x_n) \frac{e^{\Re (A^\star)^n(0^\infty x_1 \hdots x_n)}}{|\lambda|^n} \\
 &\le \left\| \psi(\cdot) e^{-W(0^\infty | \cdot)} \right\|_\theta \( \frac{\theta}{|\lambda|} \)^n \| \varphi \|_\infty \int dx_1 \hdots \int dx_n e^{\Re (A^\star)^n(0^\infty x_1 \hdots x_n)} \\
 &\le \left\| \psi(\cdot) e^{-W(0^\infty | \cdot)} \right\|_\theta \( \frac{\theta}{|\lambda|} \)^n \| \varphi \|_\infty (\rho + \varepsilon)^n
\end{align*}
and this independently of our choice of $z_1'$ and $z_2'$.
\end{proof}

We also need to control the speed of convergence of the sequence $(\mathcal{D}_{n, x'})_{n \ge 0}$ uniformly in $x' \in \Omega$.

\begin{lemma} \label{ooo}
If $\varphi \in H_\theta(\Omega)$ then for every $n \ge n_0$ and $x' \in \Omega$ we have :
\begin{equation} \label{in}
| \< \mathcal{D}_{n+1, x'}, \varphi \> - \< \mathcal{D}_{n, x'}, \varphi \> | \le \left\| e^{W(\cdot | x')} \varphi(\cdot) \right\|_\theta \| \psi \|_\infty \frac{\rho + \varepsilon}{|\lambda|} \( \frac{(\rho + \varepsilon) \theta}{|\lambda|} \)^n
\end{equation}
\end{lemma}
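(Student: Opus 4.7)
The plan is to use the eigenfunction equation $\mathcal{L}_A \psi = \lambda \psi$ to rewrite $\<\mathcal{D}_{n,x'}, \varphi\>$ as an integral over one extra coordinate, and then compare it to $\<\mathcal{D}_{n+1,x'}, \varphi\>$ term by term.

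More precisely, since $\psi(x_n \hdots x_1 x') = \frac{1}{\lambda} \int da\, e^{A(a x_n \hdots x_1 x')} \psi(a x_n \hdots x_1 x')$, I would substitute this into the definition of $\<\mathcal{D}_{n,x'}, \varphi\>$ and rename $a = x_{n+1}$. Using the telescoping identity $A(x_{n+1} x_n \hdots x_1 x') + A^n(x_n \hdots x_1 x') = A^{n+1}(x_{n+1} \hdots x_1 x')$, this produces an integral of the exact same form as the one defining $\<\mathcal{D}_{n+1,x'}, \varphi\>$, except that the factors $e^{-W(\cdot | x')} \varphi(\cdot)$ are evaluated at $0^\infty x_n \hdots x_1 \in \Omega^\star$ rather than at $0^\infty x_{n+1} x_n \hdots x_1$. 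Setting $h(y) = e^{-W(y | x')} \varphi(y)$ (which is the $\theta$-Hölder function whose seminorm appears in the statement — note that what the statement writes as $e^{W}$ must be read as $e^{-W}$ to match the formula for $\mathcal{D}_{n,x'}$), the difference then becomes
\[ \< \mathcal{D}_{n+1, x'}, \varphi \> - \< \mathcal{D}_{n, x'}, \varphi \> = \frac{1}{\lambda^{n+1}} \int dx_1 \hdots \int dx_{n+1}\, \psi(x_{n+1} \hdots x_1 x')\, e^{A^{n+1}(x_{n+1} \hdots x_1 x')} \bigl[ h(0^\infty x_{n+1} x_n \hdots x_1) - h(0^\infty x_n \hdots x_1) \bigr]. \]

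The key geometric observation is that the two points $0^\infty x_{n+1} x_n \hdots x_1$ and $0^\infty x_n \hdots x_1$ in $\Omega^\star$ coincide on their first $n$ coordinates (they both read $x_1, x_2, \hdots, x_n$) and differ only from position $n+1$ onward, so they are $\sim_n$-related. The Hölder property of $h$ therefore gives
\[ \bigl| h(0^\infty x_{n+1} x_n \hdots x_1) - h(0^\infty x_n \hdots x_1) \bigr| \le \| h \|_\theta\, \theta^n. \]

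Plugging this in, using $|\psi| \le \|\psi\|_\infty$ and $|e^{A^{n+1}}| = e^{\Re A^{n+1}}$, and then invoking the uniform bound $\int dx_1 \hdots \int dx_{n+1} e^{\Re A^{n+1}(x_{n+1} \hdots x_1 x')} \le (\rho + \varepsilon)^{n+1}$ (valid since $n \ge n_0$ implies $n+1 \ge n_0$), yields the desired inequality after regrouping the exponents as $\frac{\rho+\varepsilon}{|\lambda|} \bigl( \frac{(\rho+\varepsilon)\theta}{|\lambda|} \bigr)^n$. No real obstacle arises; the only delicate point is bookkeeping the indices in $\Omega^\star$ — in particular, verifying that the two evaluation points agree on exactly the first $n$ coordinates so that we gain $\theta^n$ rather than $\theta^{n+1}$ or $\theta^{n-1}$.
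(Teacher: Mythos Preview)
Your proposal is correct and follows essentially the same argument as the paper: substitute $\psi = \lambda^{-1}\mathcal{L}_A\psi$ to introduce an extra integration variable, collapse the Birkhoff sums, and bound the resulting difference of $e^{-W(\cdot\,|\,x')}\varphi(\cdot)$ at two $\sim_n$-related points of $\Omega^\star$ by its H\"older seminorm times $\theta^n$. Your observation about the sign is also right: the seminorm that actually arises is $\bigl\| e^{-W(\cdot\,|\,x')}\varphi(\cdot)\bigr\|_\theta$, and the $e^{W}$ in the statement (and in the constant $K_{x'}(\varphi)$ downstream) is a typo carried through the paper --- harmless, since $e^{-W}$ is $\theta$-H\"older just as well.
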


\begin{proof}
Like in lemma \ref{oo}, $y' \mapsto e^{W(y' | x')} \varphi(y')$ is $\theta$-H\"older as the product of two $\theta$-H\"older maps.
Using that $\psi$ is an $\lambda$-eigenfuntion of $\mathcal{L}_A$ and the definition of the Birkhoff sum of $A$, we get :
\begin{align*}
 \< \mathcal{D}_{n, x'}, \varphi \> &= \int dx_1 \hdots \int dx_n \left[ \frac{1}{\lambda} \int \psi(a x_n \hdots x_1 x') e^{A(a x_n \hdots x_1 x')} da \right] \\
 &\hspace{5em} \frac{e^{A^n(x_n \hdots x_1 x')}}{\lambda^n} e^{- W(0^\infty x_n \hdots x_1 | x')} \varphi(0^\infty x_n \hdots x_1) \\
 &= \int dx_1 \hdots \int dx_{n+1} \psi(x_{n+1} \hdots x_1 x') \frac{e^{A^{n+1}(x_{n+1} \hdots x_1 x')}}{\lambda^{n+1}} \\
 &\hspace{7em} e^{- W(0^\infty x_n \hdots x_1 | x')} \varphi(0^\infty x_n \hdots x_1)
\end{align*}
Hence, using that $0^\infty x_{n+1} x_n \hdots x_1 \sim_n 0^\infty 0 x_n \hdots x_1$ in $\Omega^\star$ for every $x_1, \hdots, x_n \in M$, and the definition of $\mathcal{D}_{n+1, x'}$ we have :
\begin{align*}
 | \< \mathcal{D}_{n, x'}, \varphi \> - \< \mathcal{D}_{n+1, x'}, \varphi \> | &\le \int dx_1 \hdots \int dx_{n+1} |\psi(x_{n+1} \hdots x_1 x')| \frac{e^{\Re A^{n+1}(x_{n+1} \hdots x_1 x')}}{|\lambda|^{n+1}} \\
 &\hspace{4em} \left| \splitalign{e^{- W(0^\infty x_{n+1} x_n \hdots x_1 | x')} \varphi(0^\infty x_{n+1} x_n \hdots x_1)}{\hspace{7em} - e^{- W(0^\infty 0 x_n \hdots x_1 | x')} \varphi(0^\infty 0 x_n \hdots x_1)} \right| \\
 &\le \| \psi \|_\infty \frac{(\rho + \varepsilon)^{n+1}}{|\lambda|^{n+1}} \left\| e^{W(\cdot | x')} \varphi(\cdot) \right\|_\theta \theta^n \tag*{\qedhere}
\end{align*}
\end{proof}

We can now combine lemmas \ref{oo} and \ref{ooo} together to show that the sequence $(\mathcal{D}_{n, x'})_{n \ge 0}$ converges and that its limit does not depend on the base point $x'$.
This limit will be our candidate for the preimage of $\psi$ by the map $\Phi_W$, in the sense of theorem \ref{th:ruelle}.

\begin{lemma} \label{defd}
Recall that $|\lambda| > (\rho + \varepsilon) \theta$.
If $\psi \in H_\theta(\Omega)$ is a $\lambda$-eigenfunction of $\mathcal{L}_A$, then for every $\varphi \in H_\theta(\Omega^\star)$ and for every $x' \in \Omega$ the limit $\lim_{n \to \infty} \< \mathcal{D}_{n, x'}, \varphi \>$ exists and does not depend on $x'$.
This defines a linear functional $\mathcal{D} : H_\theta(\Omega^\star) \to \mathbb{C}$ in such way that :
\[ \< \mathcal{D}, \varphi \> = \lim_{n \to \infty} \< \mathcal{D}_{n, x'}, \varphi \> \]
Moreover, for every $x' \in \Omega$ and $n \ge n_0$ we have the estimate :
\[ | \< \mathcal{D}, \varphi \> - \< \mathcal{D}_{n, x'}, \varphi \> | \le K_{x'}(\varphi) \( \frac{(\rho + \varepsilon) \theta}{|\lambda|} \)^n \]
where $K_{x'}(\varphi) = \left\| e^{W(\cdot | x')} \varphi(\cdot) \right\|_\theta \| \psi \|_\infty \displaystyle\frac{\rho + \varepsilon}{|\lambda| - (\rho + \varepsilon) \theta}$.
\end{lemma}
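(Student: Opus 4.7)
The plan is to use lemma \ref{ooo} as a geometric-type Cauchy estimate to obtain convergence, and to use lemma \ref{oo} to dispose of the dependence on the base point. The explicit error bound will then fall out of summing the geometric series.

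First, fix $x' \in \Omega$ and $\varphi \in H_\theta(\Omega^\star)$, and set $q = (\rho + \varepsilon) \theta / |\lambda|$. Since we chose $\varepsilon$ so that $|\lambda| > (\rho + \varepsilon) \theta$, we have $q \in (0, 1)$. For any $m > n \ge n_0$, a telescopic application of lemma \ref{ooo} gives
\[ | \< \mathcal{D}_{m, x'}, \varphi \> - \< \mathcal{D}_{n, x'}, \varphi \> | \le \left\| e^{W(\cdot | x')} \varphi(\cdot) \right\|_\theta \| \psi \|_\infty \frac{\rho + \varepsilon}{|\lambda|} \sum_{k=n}^{m-1} q^k \le \left\| e^{W(\cdot | x')} \varphi(\cdot) \right\|_\theta \| \psi \|_\infty \frac{\rho + \varepsilon}{|\lambda|} \cdot \frac{q^n}{1 - q}. \]
Since $1 - q = (|\lambda| - (\rho+\varepsilon)\theta)/|\lambda|$, the constant simplifies to exactly $K_{x'}(\varphi)$ as in the statement. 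The sequence $(\< \mathcal{D}_{n, x'}, \varphi \>)_{n \ge 0}$ is therefore Cauchy in $\mathbb{C}$, hence converges; I denote its limit $\< \mathcal{D}, \varphi \>$. Letting $m \to \infty$ in the display above yields the asserted estimate on $| \< \mathcal{D}, \varphi \> - \< \mathcal{D}_{n, x'}, \varphi \> |$ for $n \ge n_0$.

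Next, I would check that the limit does not depend on $x'$. Given $z_1', z_2' \in \Omega$, lemma \ref{oo} gives
\[ | \< \mathcal{D}_{n, z_1'}, \varphi \> - \< \mathcal{D}_{n, z_2'}, \varphi \> | \le \left\| \psi(\cdot) e^{-W(0^\infty | \cdot)} \right\|_\theta \| \varphi \|_\infty \, q^n \longrightarrow 0 \]
as $n \to \infty$, so the two limits coincide. Thus $\mathcal{D}$ is a well-defined map on $H_\theta(\Omega^\star)$, and linearity is inherited pointwise from the linearity of each $\mathcal{D}_{n, x'}$.

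No real obstacle here: both key estimates have already been proven, and they are perfectly tailored to this final step. The only mildly delicate point is bookkeeping — one must keep the $\theta$-seminorm factor $\|e^{W(\cdot|x')}\varphi(\cdot)\|_\theta$ (as opposed to an $\infty$-norm) under control when telescoping, so that the final constant $K_{x'}(\varphi)$ is exactly the one promised. (Note that continuity of $\mathcal{D}$ as a functional on $H_\theta(\Omega^\star)$ is not claimed in this lemma; combined with lemma \ref{dncont}, the estimate above in fact yields a bound of the form $|\< \mathcal{D}, \varphi \>| \lesssim \|\varphi\|_\infty + \|e^{W(\cdot|x')}\varphi(\cdot)\|_\theta$, which suffices to place $\mathcal{D}$ in $H_\theta(\Omega^\star)'$, but I leave this refinement for the subsequent statement in the paper.)
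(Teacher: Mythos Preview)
Your proof is correct and follows essentially the same approach as the paper: telescope lemma \ref{ooo} to obtain a Cauchy estimate (yielding convergence and the stated bound with constant $K_{x'}(\varphi)$), then invoke lemma \ref{oo} to remove the dependence on $x'$. The only cosmetic difference is that the paper wraps the base-point independence in a $3\varepsilon$ argument, whereas you pass to the limit directly in lemma \ref{oo}; both are equivalent here.
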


\begin{proof}
We fix $x' \in \Omega$.
Let $K_{x'}'(\varphi) = \left\| e^{W(\cdot | x')} \varphi(\cdot) \right\|_\theta \| \psi \|_\infty \frac{\rho + \varepsilon}{|\lambda|}$.
Iterating the inequality \eqref{in} from lemma \ref{ooo}, we get that for any $k \ge 0$ and $n \ge n_0$, using a telescopic sum :
\begin{align*}
 | \< \mathcal{D}_{n+k, x'}, \varphi \> - \< \mathcal{D}_{n, x'}, \varphi \> | &\le K_{x'}'(\varphi) \sum_{j=0}^{k-1} \( \frac{(\rho + \varepsilon) \theta}{|\lambda|} \)^{n+j} \\
 &\le K_{x'}'(\varphi) \( \frac{(\rho + \varepsilon) \theta}{|\lambda|} \)^n \frac{1}{1 - \frac{(\rho + \varepsilon) \theta}{|\lambda|}}
\end{align*}
This expression shows that $(\< \mathcal{D}_{n, x'}, \varphi \>)_{n \ge 0}$ is a Cauchy sequence, hence converges when $n$ goes to infinity.
We denote by $\mathcal{D}_{x'}$ the linear functional such that :
\[ \< \mathcal{D}_{x'}, \varphi \> = \lim_{n \to \infty} \< \mathcal{D}_{n, x'}, \varphi \> \]
Then, for every $n \ge n_0$ :
\begin{equation} \label{distconv}
 | \< \mathcal{D}_{x'}, \varphi \> - \< \mathcal{D}_{n, x'}, \varphi \> | \le K_{x'}(\varphi) \( \frac{(\rho + \varepsilon) \theta}{|\lambda|} \)^n
\end{equation}
where $K_{x'}(\varphi) = K_{x'}'(\varphi) \frac{|\lambda|}{|\lambda| - (\rho + \varepsilon) \theta}$.

Now, by a $3 \varepsilon$ argument, if $a, b \in \Omega$, lemma \ref{oo} ensures that for every $n \ge n_0$ and every map $\varphi \in H_\theta(\Omega)$ :
\begin{align*}
 | \< \mathcal{D}_{a}, \varphi \> &- \< \mathcal{D}_{b}, \varphi \> | \\
 &\le | \< \mathcal{D}_{a}, \varphi \> - \< \mathcal{D}_{a, n}, \varphi \> | + | \< \mathcal{D}_{a, n}, \varphi \> - \< \mathcal{D}_{b, n}, \varphi \> | + \< \mathcal{D}_{b, n}, \varphi \> - \< \mathcal{D}_{b}, \varphi \> | \\
 &\le K_{a}(\varphi) \( \frac{(\rho + \varepsilon) \theta}{|\lambda|} \)^n + C'(\varphi) \( \frac{(\rho + \varepsilon) \theta}{|\lambda|} \)^n + K_{b}(\varphi) \( \frac{(\rho + \varepsilon) \theta}{|\lambda|} \)^n
\end{align*}
where $C'(\varphi) = \left\| \psi(\cdot) e^{-W(0^\infty | \cdot)} \right\|_\theta \| \varphi \|_\infty$.
But since $W$ (hence $e^W$) is $\theta$-H\"older, we have an upper bound for :
\[ K_{x'}(\varphi) \le \( \left\| e^W \right\|_\theta \| \varphi \|_\infty + \left\| e^W \right\|_\infty \| \varphi \|_\theta \) \| \psi \|_\infty \frac{|\lambda|}{|\lambda| - (\rho + \varepsilon) \theta} \]
which is independent of $x'$.
This implies that this expression goes to zero when $n$ goes to infinity independently of $x'$, and :
\[ \< \mathcal{D}, \varphi \> = \< \mathcal{D}_{x'}, \varphi \> \]
is well-defined for every map $\varphi$.
The estimate follows then immediately from \eqref{distconv}.
\end{proof}

For $\mathcal{D}$ to be a preimage of $\psi$ in the sense of theorem \ref{th:ruelle}, it first needs to be a continuous linear functional on $H_\theta(\Omega^\star)$.

\begin{lemma}
The linear functional $\mathcal{D} : H_\theta(\Omega^\star) \to \mathbb{C}$ is continuous with respect to the norm $\| \cdot \|_\infty + \| \cdot \|_\theta$.
\end{lemma}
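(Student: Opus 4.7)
The plan is to combine the estimates from Lemma \ref{dncont} and Lemma \ref{defd} via a triangle inequality at a fixed truncation level. Linearity of $\mathcal{D}$ is automatic, since each $\mathcal{D}_{n,x'}$ is linear and $\langle \mathcal{D}, \varphi \rangle$ is the pointwise limit in $\varphi$; the content of the lemma is the quantitative bound.

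First I would fix an arbitrary $x' \in \Omega$ and choose the minimal truncation index $n = n_0$. Using the triangle inequality, write
\[ |\langle \mathcal{D}, \varphi \rangle| \le |\langle \mathcal{D}_{n_0, x'}, \varphi \rangle| + |\langle \mathcal{D}, \varphi \rangle - \langle \mathcal{D}_{n_0, x'}, \varphi \rangle|. \]
For the first summand, Lemma \ref{dncont} provides the bound $\|\psi\|_\infty \|e^{-W}\|_\infty \bigl((\rho+\varepsilon)/|\lambda|\bigr)^{n_0} \|\varphi\|_\infty$, which is a constant multiple of $\|\varphi\|_\infty$ and hence dominated by a constant times $\|\varphi\|_\infty + \|\varphi\|_\theta$.

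For the second summand, the estimate from Lemma \ref{defd} gives a bound of $K_{x'}(\varphi) \bigl((\rho+\varepsilon)\theta/|\lambda|\bigr)^{n_0}$. Here I would invoke precisely the uniform upper bound on $K_{x'}(\varphi)$ already derived inside the proof of Lemma \ref{defd}, namely
\[ K_{x'}(\varphi) \le \bigl(\|e^W\|_\theta \|\varphi\|_\infty + \|e^W\|_\infty \|\varphi\|_\theta\bigr)\|\psi\|_\infty \frac{|\lambda|}{|\lambda|-(\rho+\varepsilon)\theta}, \]
which follows from the product estimate $\|fg\|_\theta \le \|f\|_\infty\|g\|_\theta + \|f\|_\theta\|g\|_\infty$ applied to $e^{W(\cdot|x')}\varphi(\cdot)$. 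This bound is independent of $x'$ and is clearly a constant times $\|\varphi\|_\infty + \|\varphi\|_\theta$.

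Combining the two, one obtains a constant $C \ge 0$ (depending only on $A$, $W$, $\psi$, $\lambda$, $\varepsilon$, $n_0$) such that
\[ |\langle \mathcal{D}, \varphi \rangle| \le C\bigl(\|\varphi\|_\infty + \|\varphi\|_\theta\bigr) \]
for every $\varphi \in H_\theta(\Omega^\star)$, which is the required continuity. There is no serious obstacle: all the difficult work has been absorbed into the estimates of Lemmas \ref{dncont}, \ref{oo}, \ref{ooo}, and \ref{defd}, and the only thing left is to notice that the bound on $K_{x'}(\varphi)$ already produced in the proof of Lemma \ref{defd} is manifestly continuous in $\varphi$ with respect to $\|\cdot\|_\infty + \|\cdot\|_\theta$.
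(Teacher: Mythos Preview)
Your proposal is correct and follows essentially the same approach as the paper: fix $x'$, split $|\langle \mathcal{D},\varphi\rangle|$ by the triangle inequality at truncation level $n=n_0$, bound the finite-$n$ term by Lemma~\ref{dncont}, and bound the tail by the estimate of Lemma~\ref{defd} together with the product inequality for $\|e^{W(\cdot|x')}\varphi\|_\theta$. The paper merely writes out the resulting constants $L_1, L_2, L_1'$ explicitly, whereas you absorb them into a single $C$; there is no substantive difference.
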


\begin{proof}
Let $\varphi \in H_\theta(\Omega^\star)$, and fix some $x' \in \Omega$.
According to the estimate from lemma \ref{defd} specialized for $n = n_0$, we have :
\begin{align*}
 | \< \mathcal{D}, \varphi \> - \< \mathcal{D}_{n_0, x'}, \varphi \> | &\le \left\| e^{W(\cdot | x')} \varphi(\cdot) \right\|_\theta \| \psi \|_\infty \frac{\rho + \varepsilon}{|\lambda| - (\rho + \varepsilon) \theta} \( \frac{(\rho + \varepsilon) \theta}{|\lambda|} \)^{n_0} \\
 &\le L_1 \| \varphi \|_\infty + L_2 \| \varphi \|_\theta
\end{align*}
where :
\begin{align*}
 L_1 &= \left\| e^{W(\cdot | x')} \right\|_\infty \| \psi \|_\infty \frac{\rho + \varepsilon}{|\lambda| - (\rho + \varepsilon) \theta} \( \frac{(\rho + \varepsilon) \theta}{|\lambda|} \)^{n_0} \\
 L_2 &= \left\| e^{W(\cdot | x')} \right\|_\theta \| \psi \|_\infty \frac{\rho + \varepsilon}{|\lambda| - (\rho + \varepsilon) \theta} \( \frac{(\rho + \varepsilon) \theta}{|\lambda|} \)^{n_0}
\end{align*}
But since lemma \ref{dncont} gives for $n = n_0$ that :
\[ | \< \mathcal{D}_{n_0, x'}, \varphi \> | \le \| \psi \|_\infty \left\| e^{-W} \right\|_\infty \( \frac{\rho + \varepsilon}{|\lambda|} \)^{n_0} \| \varphi \|_\infty \]
we get that $| \< \mathcal{D}, \varphi \> | \le L_1' \| \varphi \|_\infty + L_2 \| \varphi \|_\theta$ where :
\[ L_1' = L_1 + \| \psi \|_\infty \left\| e^{-W} \right\|_\infty \( \frac{\rho + \varepsilon}{|\lambda|} \)^{n_0} \]
This completes the proof.
\end{proof}

We now have to check that this candidate is indeed an eigendistribution of $\mathcal{L}_{A^\star}$ for the eigenvalue $\lambda$.

\begin{lemma}
For any $\varphi \in H_\theta(\Omega^\star)$ we have :
\[ \< \mathcal{D}, \mathcal{L}_{A^\star} \varphi \> = \lambda \< \mathcal{D}, \varphi \> \]
\end{lemma}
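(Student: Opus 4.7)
The natural strategy is to pass to the limit $n \to \infty$ in the recurrence relation from Lemma \ref{int}. By construction of $\mathcal{D}$ in Lemma \ref{defd}, the left-hand side $\langle \mathcal{D}_{n+1,x'}, \varphi \rangle$ converges to $\langle \mathcal{D}, \varphi \rangle$. What remains is to show that the right-hand side
\[ \frac{1}{\lambda} \int da \, \left\langle \mathcal{D}_{n, a x'}, y \mapsto e^{A^\star(y a)} \varphi(y a) \right\rangle \]
converges to $\frac{1}{\lambda} \langle \mathcal{D}, \mathcal{L}_{A^\star} \varphi \rangle$.

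For each fixed $a \in M$, I would first note that the function $\varphi_a : y \mapsto e^{A^\star(y a)} \varphi(y a)$ is $\theta$-H\"older (as a product of a H\"older exponential and the composition $\varphi \circ \tau_a^\star$, which preserves H\"older regularity). Therefore $\mathcal{D}$ can be evaluated on $\varphi_a$, and Lemma \ref{defd} gives $\langle \mathcal{D}_{n, a x'}, \varphi_a \rangle \to \langle \mathcal{D}, \varphi_a \rangle$ as $n \to \infty$, independently of the base point $a x'$.

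The exchange of limit and integration over $a$ is the only potential obstacle, but it dissolves immediately because $M$ is finite and the a priori measure is $da = \sum_{j=1}^d a_j \delta_j$, so $\int \cdots da$ is a finite sum of $d$ terms. Hence pointwise convergence in $a$ is sufficient to conclude
\[ \lim_{n \to \infty} \int da \, \langle \mathcal{D}_{n, a x'}, \varphi_a \rangle = \int da \, \langle \mathcal{D}, \varphi_a \rangle. \]
(Alternatively, one could invoke the explicit estimate of Lemma \ref{defd} applied to $\varphi_a$, noting that $\|\varphi_a\|_\infty$ and $\|\varphi_a\|_\theta$ are uniformly bounded in $a \in M$, to get uniform convergence in $a$; but this is not necessary here.)

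Finally, since $\mathcal{D}$ is a continuous linear functional and $a \mapsto \varphi_a$ takes only finitely many values, linearity gives
\[ \int da \, \langle \mathcal{D}, \varphi_a \rangle = \left\langle \mathcal{D}, y \mapsto \int e^{A^\star(y a)} \varphi(y a) \, da \right\rangle = \langle \mathcal{D}, \mathcal{L}_{A^\star} \varphi \rangle. \]
Combining these equalities, we obtain $\langle \mathcal{D}, \varphi \rangle = \frac{1}{\lambda} \langle \mathcal{D}, \mathcal{L}_{A^\star} \varphi \rangle$, which rearranges to the desired eigenvalue equation.
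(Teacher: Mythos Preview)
Your argument is correct and follows the same strategy as the paper: pass to the limit in the recurrence of Lemma~\ref{int}, using Lemma~\ref{defd} to control both sides. The paper carries out explicit uniform-in-$a$ bounds on $K_{a x'}(\tilde{\varphi}_a)$ to justify interchanging the limit with $\int da$, whereas you bypass this by observing that $M$ is finite so the integral is a finite sum; your shortcut is valid in the present setting, while the paper's estimates would survive in a context where $M$ is a general compact space with a non-atomic a~priori measure.
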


\begin{proof}
Let $\varphi \in H_\theta(\Omega^\star)$, and fix $n \ge n_0$.
First note that lemma \ref{int} ensures that :
\[ \lambda \< \mathcal{D}_{n+1, x'}, \varphi \> = \int \left\< \mathcal{D}_{n, a x'}, e^{A^\star(\cdot a)} \varphi(\cdot a) \right\> da \]
hence we get that :
\begin{align*}
 | \lambda \< \mathcal{D}, \varphi \> &- \< \mathcal{D}, \mathcal{L}_{A^\star} \varphi \> | \le \left| \lambda \< \mathcal{D}, \varphi \> - \lambda \< \mathcal{D}_{n+1, x'}, \varphi \> \right| \\
 &\hspace{6em}+   \left| \lambda \< \mathcal{D}_{n+1, x'}, \varphi \> - \int \left\< \mathcal{D}_{n, a x'}, e^{A^\star(\cdot a)} \varphi(\cdot a) \right\> da \right| \\
 &\hspace{6em}+   \left| \int \left\< \mathcal{D}_{n, a x'}, e^{A^\star(\cdot a)} \varphi(\cdot a) \right\> da - \< \mathcal{D}, \mathcal{L}_{A^\star} \varphi \> \right| \\
 &\le \left| \lambda \< \mathcal{D}, \varphi \> - \lambda \< \mathcal{D}_{n+1, x'}, \varphi \> \right| + \left| \int \left\< \mathcal{D}_{n, a x'}, e^{A^\star(\cdot a)} \varphi(\cdot a) \right\> da - \< \mathcal{D}, \mathcal{L}_{A^\star} \varphi \> \right|
\end{align*}
We will estimate each part of this upper bound successively.

We start by the first term.
Since $n \ge n_0$, the estimate of lemma \ref{defd} gives that :
\[ \left| \lambda \< \mathcal{D}, \varphi \> - \lambda \< \mathcal{D}_{n+1, x'}, \varphi \> \right| \le |\lambda| K_{x'}(\varphi) \( \frac{(\rho + \varepsilon) \theta}{|\lambda|} \)^{n+1} \]
This will be enough for our needs.

Now focus on the second term.
We first note that by definition of $\mathcal{L}_{A^\star}$ :
\begin{equation} \label{eq:invsecondpart}
 \left| \int \left\< \mathcal{D}_{n, a x'}, e^{A^\star(\cdot a)} \varphi(\cdot a) \right\> da - \< \mathcal{D}, \mathcal{L}_{A^\star} \varphi \> \right| \le \int \left| \left\< \mathcal{D}_{n, a x'}, \tilde{\varphi}_a \right\> - \left\< \mathcal{D}, \tilde{\varphi}_a \right\> \right| da
\end{equation}
where $\tilde{\varphi}_a(\cdot) = e^{A^\star(\cdot a)} \varphi(\cdot a)$.
Then, the estimate of lemma \ref{defd} specialized for every $\tilde{\varphi}_a, a \in M$ gives :
\[ \left| \< \mathcal{D}_{n, a x'}, \tilde{\varphi}_a \> - \< \mathcal{D}, \tilde{\varphi}_a \> \right| \le K_{a x'}(\tilde{\varphi}_a) \( \frac{(\rho + \varepsilon) \theta}{|\lambda|} \)^n \]
We now need to find an upper bound for $K_{a x'}(\tilde{\varphi}_a)$ that does not depend on $a$.
From the definition of $K_{a x'}(\tilde{\varphi}_a)$, we have :
\[ K_{a x'}(\tilde{\varphi}_a) \le \( \left\| e^{W(\cdot | x')} \right\|_\theta \| \tilde{\varphi}_a \|_\infty + \left\| e^{W(\cdot | x')} \right\|_\infty \| \tilde{\varphi}_a \|_\theta \) \frac{\rho + \varepsilon}{|\lambda| - (\rho + \varepsilon) \theta} \]
hence it is enough to find an upper bound of $\| \tilde{\varphi}_a \|_\infty$ and $\| \tilde{\varphi}_a \|_\theta$ that does not depend on $a$.
It is clear that $\| \tilde{\varphi}_a \|_\infty \le \| \varphi \|_\infty$.
About $\| \tilde{\varphi}_a \|_\theta$, note that for every $a \in M$ :
\[ \| \tilde{\varphi}_a \|_\theta \le \left\| e^{A^\star(\cdot a)} \right\|_\theta \| \varphi(\cdot a) \|_\infty + \| \varphi(\cdot a) \|_\theta \left\| e^{A^\star(\cdot a)} \right\|_\infty \]
We need estimates for both $\| e^{A^\star(\cdot a)} \|_\theta$ and $\| \varphi(\cdot a) \|_\theta$.
Take $z, z' \in \Omega^\star$ such that $z \sim_p z'$.
Then $z a \sim_{p+1} z' a$ and we get :
\[ |\varphi(z a) - \varphi(z' a)| \le \| \varphi \|_\theta \theta^{p+1} \]
which shows that $\| \varphi(. a) \|_\theta \le \| \varphi \|_\theta \theta$ and likewise $\| e^{A^\star(. a)} \|_\theta \le \| e^{A^\star} \|_\theta \theta$.
Thus :
\[ \| \tilde{\varphi}_a \|_\theta \le \theta \( \left\| e^{A^\star} \right\|_\theta \| \varphi \|_\infty + \| \varphi \|_\theta \left\| e^{A^\star} \right\|_\infty \) \]
independently of $a$.
Therefore, there exists a constant $\tilde{K}_{x'}(\varphi)$ such that, for every $a \in M$ :
\[ \left| \left\< \mathcal{D}_{n, a x'}, \tilde{\varphi}_a \right\> - \left\< \mathcal{D}_{a x'}, \tilde{\varphi}_a \right\> \right| \le \tilde{K}_{x'}(\varphi) \( \frac{(\rho + \varepsilon) \theta}{|\lambda|} \)^n \]
Then, by integrating this inequality and plugging it into \eqref{eq:invsecondpart}, we get :
\begin{align*}
 &\left| \int \left\< \mathcal{D}_{n, a x'}, e^{A^\star(\cdot a)} \varphi(\cdot a) \right\> da - \< \mathcal{D}, \mathcal{L}_{A^\star} \varphi \> \right| \\
 &\hspace{5em} \le \int \tilde{K}_{x'}(\varphi) \( \frac{(\rho + \varepsilon) \theta}{|\lambda|} \)^n da = \tilde{K}_{x'}(\varphi) \( \frac{(\rho + \varepsilon)}{|\lambda|} \)^n
\end{align*}

Gathering these two estimates, we finally get that :
\[ \left| \lambda \< \mathcal{D}, \varphi \> - \< \mathcal{D}, \mathcal{L}_{A^\star} \varphi \> \right| \le |\lambda| K_{x'}(\varphi) \( \frac{(\rho + \varepsilon) \theta}{|\lambda|} \)^{n+1} + \tilde{K}_{x'}(\varphi) \( \frac{(\rho + \varepsilon) \theta}{|\lambda|} \)^n \]
which goes to zero when $n$ goes to the infinty.
\end{proof}

Finally, we need to check that $\mathcal{D}$ is actually a preimage of $\psi$ by $\Phi_W$.

\begin{lemma} \label{lem:surj}
$\Phi_W(\mathcal{D}) = \psi$.
\end{lemma}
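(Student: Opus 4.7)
The identity to prove is $\Phi_W(\mathcal{D})(x) = \psi(x)$ for every $x \in \Omega$, which by definition of $\Phi_W$ reads $\langle \mathcal{D}, e^{W(\cdot\,|\,x)} \rangle = \psi(x)$. My plan is to exploit two facts established earlier: first, $\mathcal{D}$ is independent of the auxiliary base point $x'$ used in the definition of $\mathcal{D}_{n,x'}$, so for each fixed $x$ I am free to evaluate the limit along the base point $x' = x$; second, $\psi$ is a $\lambda$-eigenfunction of $\mathcal{L}_A$, hence $\mathcal{L}_A^n \psi = \lambda^n \psi$ for all $n \ge 0$.

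With the choice $x' = x$, the test function $\varphi(\cdot) = e^{W(\cdot\,|\,x)}$ cancels exactly with the factor $e^{-W(0^\infty x_n \cdots x_1\,|\,x')}$ appearing inside $\mathcal{D}_{n,x'}$, since both are evaluated at the same point $x$ in the unstable coordinate. What remains in the integrand is
\[
 \psi(x_n \cdots x_1 x) \, \frac{e^{A^n(x_n \cdots x_1 x)}}{\lambda^n},
\]
which, upon integrating against $dx_1 \cdots dx_n$, is precisely $\lambda^{-n} \mathcal{L}_A^n \psi(x)$. Thus each term in the sequence $\langle \mathcal{D}_{n,x}, e^{W(\cdot\,|\,x)}\rangle$ equals $\psi(x)$ exactly.

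Passing to the limit yields $\langle \mathcal{D}, e^{W(\cdot\,|\,x)}\rangle = \psi(x)$. The only subtlety to justify rigorously is that the convergence $\mathcal{D}_{n,x'} \to \mathcal{D}$ obtained in Lemma \ref{defd} holds when tested against the specific H\"older function $\varphi = e^{W(\cdot\,|\,x)}$; but this is immediate because $e^{W(\cdot\,|\,x)} \in H_\theta(\Omega^\star)$ and $\mathcal{D}$ was shown to be a continuous linear functional on $H_\theta(\Omega^\star)$ whose value equals the pointwise limit of $\langle \mathcal{D}_{n,x'}, \cdot \rangle$ for any $x' \in \Omega$, in particular $x' = x$. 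There is essentially no obstacle here: the proof reduces to a cancellation of $W$ factors followed by the eigenfunction property, and the construction of $\mathcal{D}$ was precisely designed to make this cancellation work.
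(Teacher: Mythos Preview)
Your proof is correct and follows essentially the same approach as the paper: choose the base point $x'$ equal to the evaluation point, observe that the factor $e^{-W(0^\infty x_n\cdots x_1\,|\,x')}$ cancels against the test function $e^{W(0^\infty x_n\cdots x_1\,|\,x')}$, recognize the remaining integral as $\lambda^{-n}\mathcal{L}_A^n\psi(x')=\psi(x')$, and pass to the limit via Lemma~\ref{defd}. The paper makes the final step slightly more explicit by writing out the bound $\bigl|\langle\mathcal{D},e^{W(\cdot|x')}\rangle-\psi(x')\bigr|\le K_{x'}\bigl(e^{W(\cdot|x')}\bigr)\bigl((\rho+\varepsilon)\theta/|\lambda|\bigr)^n$, but your justification is equivalent.
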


\begin{proof}
For a fixed $x'$ and any $n \ge 0$, we have :
\begin{align*}
 \left\< \mathcal{D}_{n, x'}, e^{W(\cdot | x')} \right\> &= \int dx_1 \hdots \int dx_n \psi(x_n \hdots x_1 x') \\
 &\hspace{6em} \frac{e^{A^n(x_n \hdots x_1 x')}}{\lambda^n} e^{- W(0^\infty x_n \hdots x_1 | x')} e^{W(0^\infty x_n \hdots x_1 | x')} \\
 &= \frac{1}{\lambda^n} \mathcal{L}^n_A \psi(x') = \psi(x')
\end{align*}
Thus, for any $n \ge n_0$ and $x' \in \Omega$, the estimate of lemma \ref{defd} gives :
\begin{align*}
 \left| \left\< \mathcal{D}, e^{W(\cdot | x')} \right\> - \psi(x') \right| &= \left| \left\< \mathcal{D}_{x'}, e^{W(\cdot | x')} \right\> - \left\< \mathcal{D}_{n, x'}, e^{W(\cdot | x')} \right\> \right| \\
 &\le K_{x'}\( e^{W(\cdot | x')} \) \( \frac{(\rho + \varepsilon) \theta}{|\lambda|} \)^n
\end{align*}
which converges to $0$ when $n$ goes to the infinity.
\end{proof}

Note that in \cite{Pit1}, \cite{Pit3} and \cite{LT}, where similar results are obtained for a specific family of Markov maps of the circle, it was possible to prove directly that the analogue of the $\Phi_W$ map is injective thanks to the smooth structure on the circle which gives additional regularity to the eigendistributions.
This is not the case in our symbolic setting, so we must resort to a dimension argument.

Denote by :
\begin{align*}
 E_\lambda(A)       &= \left\{ \psi \in H_\theta(\Omega) \,\middle|\, \mathcal{L}_A \psi = \lambda \psi \right\} \\
 F_\lambda(A^\star) &= \left\{ \nu \in H_\theta(\Omega^\star)' \,\middle|\, \forall \varphi \in H_\theta(\Omega^\star), \< \nu, \mathcal{L}_{A^\star} \varphi \> = \lambda \< \nu, \varphi \> \right\}
\end{align*}
the spaces of respectively $\lambda$-eigenfunctions of $\mathcal{L}_A$ and $\lambda$-eigendistributions of $\mathcal{L}_{A^\star}$.
The spaces $E_\lambda(A^\star)$ and $F_\lambda(A)$ are defined in a similar way.
Note that $F_\lambda(A^\star)$ (respectively $F_\lambda(A)$) formally coincides with the $\lambda$-eigenspace of the dual operator $\mathcal{L}_{A^\star}^\star$ of $\mathcal{L}_{A^\star}$ (respectively $\mathcal{L}_A^\star$ of $\mathcal{L}_A$).
When $\lambda$ is in the isolated spectrum, we shall prove that these vector spaces $E_\lambda(A)$ and $F_\lambda(A^\star)$ have the same finite dimension.

\begin{lemma} \label{lem:dim}
If $|\lambda| > \rho \theta$, then $\dim E_\lambda(A) = \dim F_\lambda(A^\star)$.
\end{lemma}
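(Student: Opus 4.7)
The plan is to prove $\dim E_\lambda(A) = \dim F_\lambda(A^\star)$ by squeezing the two dimensions between each other using (i) the surjection already provided by lemma \ref{lem:surj}, (ii) the symmetric version of the same construction obtained by exchanging the roles of $A$ and $A^\star$, and (iii) the classical spectral fact that, for an isolated eigenvalue of a bounded operator, the eigenspaces of the operator and of its Banach-space adjoint have the same dimension.

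Specifically, lemma \ref{lem:surj} shows that $\Phi_W$ restricts to a surjection $F_\lambda(A^\star) \twoheadrightarrow E_\lambda(A)$, giving at once $\dim F_\lambda(A^\star) \ge \dim E_\lambda(A)$. Since $|\lambda| > \rho\theta$, theorem 1.5 of \cite{Bal} together with lemma \ref{rho} places $\lambda$ in the isolated spectrum of both $\mathcal{L}_A$ and $\mathcal{L}_{A^\star}$, with finite algebraic multiplicity. The Riesz spectral projection onto the generalized eigenspace of $\mathcal{L}_A$ at $\lambda$ is then a finite-rank operator whose Banach-space adjoint is the Riesz projection of $\mathcal{L}_A^\star$ for the same eigenvalue, of the same rank; restricted to these finite-dimensional invariant subspaces, $\mathcal{L}_A$ and $\mathcal{L}_A^\star$ are represented in dual bases by matrices transpose to one another, and since $\dim \ker(M - \lambda I) = \dim \ker(M^\top - \lambda I)$ for any square matrix, one obtains $\dim E_\lambda(A) = \dim F_\lambda(A)$. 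The same argument applied to $A^\star$ yields $\dim E_\lambda(A^\star) = \dim F_\lambda(A^\star)$.

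The remaining ingredient is that the entire construction of sections 2 and 3 is symmetric under the swap $\Omega \leftrightarrow \Omega^\star$. Applying proposition \ref{prop:invkern} to $A^\star$ regarded as a primary potential on $(\Omega^\star, \sigma^\star)$ produces a dual potential $(A^\star)^\star \in H_\theta(\Omega)$ and an involution kernel $W' \in H_\theta(X)$, and rerunning the argument of lemma \ref{lem:surj} verbatim with the two sides exchanged gives a surjection $\Phi_{W'} : F_\lambda((A^\star)^\star) \twoheadrightarrow E_\lambda(A^\star)$. Moreover, Sinai's procedure applied twice returns a potential cohomologous to the original, that is $(A^\star)^\star - A = g - g\circ\sigma$ for some $g \in H_\theta(\Omega)$; cohomologous potentials have Ruelle operators conjugate via multiplication by $e^g$, so $\dim F_\lambda((A^\star)^\star) = \dim F_\lambda(A)$, and consequently $\dim F_\lambda(A) \ge \dim E_\lambda(A^\star)$. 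Chaining the four estimates,
\[ \dim F_\lambda(A^\star) \ge \dim E_\lambda(A) = \dim F_\lambda(A) \ge \dim E_\lambda(A^\star) = \dim F_\lambda(A^\star), \]
forces equality throughout, which is the desired statement. The delicate step, on which this argument rests, is the verification that $(A^\star)^\star$ is genuinely cohomologous to $A$ over $\sigma$: this is a bookkeeping exercise exploiting the freedom in the choice of the base point $z$ used in the construction of $W$ and $W'$, but no essentially new idea is needed beyond what is already present in the proof of proposition \ref{prop:invkern}.
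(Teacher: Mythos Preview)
Your argument is correct and follows the same overall strategy as the paper's: surjectivity of $\Phi_W$ from lemma \ref{lem:surj}, the equality $\dim E_\lambda = \dim F_\lambda$ for isolated eigenvalues via Riesz projections (the paper cites Dunford--Schwartz for the same fact), and a symmetric surjection in the reverse direction, all chained together. The only substantive difference lies in how the reverse surjection is obtained. You rebuild a fresh involution kernel $W'$ for $A^\star$ via proposition \ref{prop:invkern}, land on a bidual $(A^\star)^\star$, and then must argue that $(A^\star)^\star$ is cohomologous to $A$ over $(\Omega,\sigma)$ --- your self-identified ``delicate step''. The paper sidesteps this entirely: composing equation \eqref{W0} with $\hat\sigma$ yields
\[ A(x) = A^\star(\hat\sigma(y \mid x)) + W(\hat\sigma(y \mid x)) - W(y \mid x), \]
which says that the \emph{same} $W$ is already an involution kernel between $A^\star$ and $A$ for the inverse shift $(X,\hat\sigma^{-1})$. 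The construction of sections 2--3 then reruns verbatim with the roles of $(\Omega,\sigma,A)$ and $(\Omega^\star,\sigma^\star,A^\star)$ swapped, giving the surjection $F_\lambda(A)\to E_\lambda(A^\star)$ directly, with no cohomology to check. Your route is valid (both $A$ and $(A^\star)^\star$ are duals of $A^\star$, so their difference is a two-sided coboundary with vanishing periodic sums, and Liv\v{s}ic finishes it; your ``base-point bookkeeping'' description somewhat undersells this), but the paper's one-line observation is shorter and eliminates precisely the step you flagged as delicate.
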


\begin{proof}
If $|\lambda| > \rho \theta$, then $\lambda$ lies in the isolated spectrum of $\mathcal{L}_A : H_\theta(\Omega) \to H_\theta(\Omega)$, whose essential spectral radius is bounded from above by $\rho \theta$ according to \cite{PP}.
Given that $\mathcal{L}_A$ is a bounded operator, lemma VIII.8.2 from \cite{DS} ensures that the $\lambda$-eigenspace $E_\lambda(A)$ and the $\lambda$-eigenspace of its dual operator $\mathcal{L}_A^\star$ have same finite dimension, hence :
\[ \dim E_\lambda(A) = \dim F_\lambda(A) \]
Moreover, $|\lambda| > \rho \theta = \rho^\star \theta$ according to lemma $\ref{rho}$, so it is also in the isolated spectrum of $\mathcal{L}_{A^\star} : H_\theta(\Omega^\star) \to H_\theta(\Omega^\star)$ and the same argument gives :
\[ \dim E_\lambda(A^\star) = \dim F_\lambda(A^\star) \]
Since $\Phi_W$ is surjective from $F_\lambda(A^\star)$ to $E_\lambda(A)$, we already know that :
\[ \dim E_\lambda(A) \le \dim F_\lambda(A^\star) \]
But note that the equation \eqref{W0} that defines the involution kernel $W$ can be rewritten after composition by $\hat{\sigma}$ as :
\[ A(x) = A^\star(\hat{\sigma}(y | x)) + W(\hat{\sigma}(y | x)) - W(y | x) \]
which means that $W$ is also an involution kernel between $A$ and $A^\star$ for the inverse shift $(X, \hat{\sigma}^{-1})$.
The whole construction can then be applied to this new setup where the roles of $(\Omega, \sigma, A)$ and $(\Omega^\star, \sigma^\star, A^\star)$ are reversed, and we get that the corresponding $\Phi_W : H_\theta(\Omega)' \to \mathcal{C}(\Omega^\star)$ is surjective from $F_\lambda(A)$ to $E_\lambda(A^\star)$.
Hence :
\[ \dim E_\lambda(A^\star) \le \dim F_\lambda(A) \]
This implies that all these vector spaces have the same dimension.
\end{proof}

Since lemma \ref{lem:surj} shows that the map $\Phi_W$ is surjective from $F_\lambda(A^\star)$ to $E_\lambda(A)$, and that lemma \ref{lem:dim} tells us that these two vector spaces have the same dimension, we have completed the proof of theorem \ref{th:ruelle}.

\section{Koopman operator duality}

In this section, we will give a proof of theorem \ref{th:koopman}, which establishes a relation between eigenfunctions of $U_B$ and eigendistributions of $U_{C^\star}$.
Denote by :
\begin{align*}
 \tilde{E}_\lambda(B)       &= \left\{ \psi \in H_\theta(\Omega) \,\middle|\, U_B \psi = \lambda \psi \right\} \\
 \tilde{F}_\lambda(C^\star) &= \left\{ \nu \in H_\theta(\Omega^\star)' \,\middle|\, \forall \varphi \in H_\theta(\Omega^\star), \< \nu, U_{C^\star} \varphi \> = \lambda \< \nu, \varphi \> \right\}
\end{align*}
the $\lambda$-eigenspaces of respectively $U_B$ and $U_{C^\star}$.

We shall make extensive use of this fundamental relation between a Ruelle operator $\mathcal{L}_A : H_\theta(\Omega) \to H_\theta(\Omega)$ and a Koopman operator $U_B : H_\theta(\Omega) \to H_\theta(\Omega)$ for any choice of $A, B \in H_\theta(\Omega)$ :
\begin{equation} \label{eq:trans-koop}
 \forall f, g \in H_\theta(\Omega), \mathcal{L}_A \left[ f U_B g \right] = g \mathcal{L}_A \left[ e^B f \right]
\end{equation}

It can be used to derive an isomorphism between the spaces of eigenfunctions of $\mathcal{L}_A$ and $U_B$ for any potentials $A$ and $B$.

\begin{proposition} \label{prop:trans-koop-func}
 For every $A, B \in H_\theta(\Omega)$, there exist a map $f_{A+B} \in H_\theta(\Omega)$ with $f_{A+B} > 0$ and a $\lambda_{A+B} > 0$ such that for every $\lambda \in \mathbb{C} \setminus \{ 0 \}$ the map :
 \[ \mathcal{E}_{A+B} : \psi \in \tilde{E}_{\frac{\lambda_{A+B}}{\lambda}}(B) \mapsto f_{A+B} \psi \in E_\lambda(A) \]
 is an isomorphism.
\end{proposition}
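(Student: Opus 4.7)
The strategy is to exhibit $f_{A+B}$ as a strictly positive Hölder eigenfunction of the auxiliary transfer operator $\mathcal{L}_{A+B}$, and then use the intertwining identity \eqref{eq:trans-koop} to translate between the spectral data of $U_B$ and that of $\mathcal{L}_A$. The key point is that multiplication by $f_{A+B}$ is a bijection of $H_\theta(\Omega)$ (since $f_{A+B}$ and $1/f_{A+B}$ are both positive and Hölder), so the work lies in checking that it preserves the eigenspace structure.

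First, I would apply the Ruelle–Perron–Frobenius theorem to $\mathcal{L}_{A+B}$ to obtain a strictly positive $f_{A+B} \in H_\theta(\Omega)$ and a simple positive eigenvalue $\lambda_{A+B} > 0$ with $\mathcal{L}_{A+B} f_{A+B} = \lambda_{A+B} f_{A+B}$ (reducing to the real part if $A+B$ is complex-valued). Then, setting $\mu = \lambda_{A+B}/\lambda$, the identity \eqref{eq:trans-koop} applied with $f = f_{A+B}$ and $g = \psi$ gives
\[ \mathcal{L}_A[f_{A+B}\, U_B \psi] \;=\; \psi\, \mathcal{L}_A[e^B f_{A+B}] \;=\; \psi\, \mathcal{L}_{A+B} f_{A+B} \;=\; \lambda_{A+B} f_{A+B} \psi. \]
For $\psi \in \tilde{E}_\mu(B)$, the left-hand side equals $\mu\, \mathcal{L}_A[f_{A+B} \psi]$, so $\mathcal{L}_A[f_{A+B} \psi] = \lambda (f_{A+B} \psi)$, which shows that $\mathcal{E}_{A+B}$ sends $\tilde{E}_\mu(B)$ into $E_\lambda(A)$. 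Injectivity is immediate from $f_{A+B} > 0$.

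For surjectivity, the natural candidate inverse is $\varphi \mapsto \varphi/f_{A+B}$. Given $\varphi \in E_\lambda(A)$, set $\psi = \varphi/f_{A+B} \in H_\theta(\Omega)$. The same identity run backwards yields $\mathcal{L}_A[f_{A+B}(U_B \psi - \mu \psi)] = \lambda_{A+B} f_{A+B} \psi - \mu \lambda f_{A+B} \psi = 0$. It remains to upgrade this to the pointwise equality $U_B \psi = \mu \psi$.

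The main obstacle is precisely this last step, since $\mathcal{L}_A$ can have non-trivial kernel on $H_\theta(\Omega)$ and so $\mathcal{L}_A[f_{A+B} \eta] = 0$ does not a priori force $\eta = 0$. The planned way around is to iterate: a simple induction on the identity gives $\mathcal{L}_A^n[f_{A+B}\, U_B^n h] = \lambda_{A+B}^n f_{A+B} h$ for all $h \in H_\theta(\Omega)$, so applying this with $h = \psi$ and comparing with $\mathcal{L}_A^n[f_{A+B} \psi] = \lambda^n f_{A+B} \psi$ shows that $f_{A+B}(U_B^n \psi - \mu^n \psi)$ lies in $\ker \mathcal{L}_A^n$ for every $n$. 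Combined with the quasicompactness of $\mathcal{L}_A$ on $H_\theta(\Omega)$ (so that the generalised kernels $\ker \mathcal{L}_A^n$ stabilise in finite dimension) and the rigidity of Koopman eigenfunctions (they are determined on each forward orbit by a single value together with the cocycle $e^{B^n}$), this iterated information pins down $U_B \psi = \mu \psi$, completing the proof that $\mathcal{E}_{A+B}$ is an isomorphism.
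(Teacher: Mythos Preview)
Your construction of $f_{A+B}$ via Ruelle--Perron--Frobenius and your verification that $\mathcal{E}_{A+B}$ carries $\tilde E_{\lambda_{A+B}/\lambda}(B)$ into $E_\lambda(A)$ are correct and coincide with the paper's argument line for line; injectivity from $f_{A+B}>0$ is likewise identical. You are also right to isolate surjectivity as the only substantive issue, and in fact the paper does not supply an argument for it either: after the forward computation it simply writes ``since $f_{A+B}>0$, it is clear that $\mathcal{E}_{A+B}$ is an isomorphism'', which establishes only that multiplication by $f_{A+B}$ is a bijection of $H_\theta(\Omega)$, not that its inverse takes $E_\lambda(A)$ back into $\tilde E_{\lambda_{A+B}/\lambda}(B)$.

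Your proposed completion, however, cannot be made to work, because the surjectivity claim is actually false at the stated generality (all $\lambda\neq0$). Take $A=B=0$ on the full two-shift with the uniform a priori measure, so that $f_{A+B}=1$ and $\lambda_{A+B}=1$, and fix any $0<\lambda\le\theta$. With $g(1)=1$, $g(2)=-1$, the function $\varphi(x)=\sum_{n\ge1}\lambda^{\,n-1}g(x_n)$ lies in $H_\theta(\Omega)$ and satisfies $\mathcal{L}_0\varphi=\lambda\varphi$, so $E_\lambda(0)\neq\{0\}$; yet any continuous $\psi$ with $\psi\circ\sigma=\lambda^{-1}\psi$ must vanish at every periodic point (since $\lambda^{-m}\neq1$ for all $m\ge1$) and hence identically, so $\tilde E_{1/\lambda}(0)=\{0\}$ and $\mathcal{E}_{A+B}$ is not onto. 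Your argument breaks precisely where you left it vague: $0$ lies inside the essential spectral disk of $\mathcal{L}_A$, so the kernels $\ker\mathcal{L}_A^n$ are infinite-dimensional and do not stabilise, and the information $f_{A+B}(U_B^n\psi-\mu^n\psi)\in\ker\mathcal{L}_A^n$ for all $n$ does not force $U_B\psi=\mu\psi$. The upshot is that the proposition as stated is too strong; only the range $|\lambda|>\rho\theta$ is used in Theorem~\ref{th:koopman}, and there a separate argument for surjectivity would be required.
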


\begin{proof}
 Since $A + B \in H_\theta(\Omega)$, by Ruelle-Perron-Frobenius' theorem, we have the existence of $f_{A+B} \in H_\theta(\Omega), f_{A+B} > 0$ and $\lambda_{A+B} > 0$ such that :
 \[ \mathcal{L}_{A+B} f_{A+B} = \lambda_{A+B} f_{A+B} \]
 We shall prove that these $f_{A+B}$ and $\lambda_{A+B}$ are suitable.
 First, it is clear that if $\psi \in H_\theta(\Omega)$ then so does $f_{A+B} \psi$.
 Now assume that $U_B \psi = \frac{\lambda_{A+B}}{\lambda} \psi$.
 Using equation \eqref{eq:trans-koop}, we have :
 \[ \psi \mathcal{L}_A \left[ e^B f_{A+B} \right] = \mathcal{L}_A \left[ f_{A+B} U_B \psi \right] = \frac{\lambda_{A+B}}{\lambda} \mathcal{L}_A \left[ f_{A+B} \psi \right] \]
 But by definition of $f_{A+B}$ we also have :
 \[ \mathcal{L}_A \left[ e^B f_{A+B} \right] = \mathcal{L}_{A+B} f_{A+B} = \lambda_{A+B} f_{A+B} \]
 Hence :
 \[ \lambda_{A+B} f_{A+B} \psi = \frac{\lambda_{A+B}}{\lambda} \mathcal{L}_A \left[ f_{A+B} \psi \right] \]
 which shows that $f_{A+B} \psi \in E_\lambda(A)$.
 Finally, since $f_{A+B} > 0$, it is clear that $\mathcal{E}_{A+B}$ is an isomorphism.
\end{proof}

We also have a similar isomorphism between the spaces of eigendistributions of $\mathcal{L}_{A^\star}$ and $U_{C^\star}$.

\begin{proposition} \label{prop:trans-koop-dist}
 For every $A^\star, C^\star \in H_\theta(\Omega^\star)$, there exist a map $f_{A^\star+C^\star} \in H_\theta(\Omega^\star)$ with $f_{A^\star+C^\star} > 0$ and a $\lambda_{A^\star+C^\star} > 0$ such that for every $\lambda \in \mathbb{C} \setminus \{ 0 \}$ the map :
 \[ \mathcal{F}_{A^\star+C^\star} : \nu \in F_\lambda(A^\star) \mapsto f_{A^\star+C^\star} \nu \in \tilde{F}_{\frac{\lambda_{A^\star+C^\star}}{\lambda}}(C^\star) \]
 is an isomorphism.
\end{proposition}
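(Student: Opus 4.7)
The plan is to mirror the proof of Proposition \ref{prop:trans-koop-func}, transposing the key identity \eqref{eq:trans-koop} from the primal to the dual side. First, I would apply the Ruelle-Perron-Frobenius theorem to the potential $A^\star + C^\star \in H_\theta(\Omega^\star)$ to obtain a strictly positive eigenfunction $f_{A^\star+C^\star} \in H_\theta(\Omega^\star)$ and a positive eigenvalue $\lambda_{A^\star+C^\star} > 0$ satisfying
\[ \mathcal{L}_{A^\star + C^\star} f_{A^\star+C^\star} = \lambda_{A^\star+C^\star} f_{A^\star+C^\star}, \]
i.e., $\mathcal{L}_{A^\star}[e^{C^\star} f_{A^\star+C^\star}] = \lambda_{A^\star+C^\star} f_{A^\star+C^\star}$.

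Next, I would use the analogue of the identity \eqref{eq:trans-koop} on $\Omega^\star$, which reads $\mathcal{L}_{A^\star}[f U_{C^\star} g] = g\, \mathcal{L}_{A^\star}[e^{C^\star} f]$ for $f, g \in H_\theta(\Omega^\star)$. Given $\nu \in F_\lambda(A^\star)$ and $\varphi \in H_\theta(\Omega^\star)$, I set $f := f_{A^\star+C^\star}$ and compute both sides of $\langle \nu, \mathcal{L}_{A^\star}[f\, U_{C^\star} \varphi]\rangle$. On one hand, the eigendistribution property of $\nu$ gives $\lambda \langle \nu, f\, U_{C^\star} \varphi\rangle = \lambda \langle f\nu, U_{C^\star}\varphi\rangle$. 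On the other hand, the identity together with the eigenfunction property yields $\langle \nu, \varphi \cdot \lambda_{A^\star+C^\star} f\rangle = \lambda_{A^\star+C^\star} \langle f\nu, \varphi\rangle$. Equating the two expressions gives exactly $\langle f\nu, U_{C^\star}\varphi\rangle = \tfrac{\lambda_{A^\star+C^\star}}{\lambda} \langle f\nu, \varphi\rangle$, showing that $f\nu \in \tilde{F}_{\lambda_{A^\star+C^\star}/\lambda}(C^\star)$. Continuity of the functional $f\nu$ on $H_\theta(\Omega^\star)$ follows from the fact that multiplication by $f \in H_\theta(\Omega^\star)$ is a continuous endomorphism of $H_\theta(\Omega^\star)$.

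For the isomorphism statement, since $f_{A^\star+C^\star}$ is strictly positive and continuous on the compact space $\Omega^\star$, its reciprocal $1/f_{A^\star+C^\star}$ belongs again to $H_\theta(\Omega^\star)$, so the map $\mu \mapsto (1/f_{A^\star+C^\star})\mu$ provides a continuous linear inverse to $\mathcal{F}_{A^\star+C^\star}$. Running the same argument with $A^\star$ and $C^\star$ swapped in their roles (or more directly, by observing that the computation above is fully reversible) shows that this inverse maps $\tilde{F}_{\lambda_{A^\star+C^\star}/\lambda}(C^\star)$ back into $F_\lambda(A^\star)$, giving the bijection.

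I do not expect any real obstacle here: the construction is essentially the dual transcription of Proposition \ref{prop:trans-koop-func}. The only minor care point is to justify that multiplication of distributions by $f$ and by $1/f$ stays within $H_\theta(\Omega^\star)'$, which reduces to checking that $f, 1/f \in H_\theta(\Omega^\star)$ — an immediate consequence of $f > 0$ being H\"older on a compact space.
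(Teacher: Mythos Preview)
Your proposal is correct and follows essentially the same approach as the paper: apply Ruelle--Perron--Frobenius to $A^\star+C^\star$, pair the eigendistribution relation for $\nu$ against $\mathcal{L}_{A^\star}[f_{A^\star+C^\star} U_{C^\star}\varphi]$ using the identity \eqref{eq:trans-koop}, and conclude the isomorphism from $f_{A^\star+C^\star}>0$. If anything, you are slightly more explicit than the paper in justifying continuity of $f\nu$ and the existence of the inverse via $1/f \in H_\theta(\Omega^\star)$.
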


\begin{proof}
 Since $A^\star + C^\star \in H_\theta(\Omega^\star)$, by Ruelle-Perron-Frobenius' theorem, we have the existence of $f_{A^\star+C^\star} \in H_\theta(\Omega^\star), f_{A^\star+C^\star} > 0$ and $\lambda_{A^\star+C^\star} > 0$ such that :
 \[ \mathcal{L}_{A^\star+C^\star} f_{A^\star+C^\star} = \lambda_{A^\star+C^\star} f_{A^\star+C^\star} \]
 We shall prove that these $f_{A^\star+C^\star}$ and $\lambda_{A^\star+C^\star}$ are suitable.
 First, it is clear that if $\nu \in H_\theta(\Omega^\star)'$ then so does $f_{A^\star+C^\star} \nu$.
 Now assume that $\mathcal{L}_{A^\star}^\star \nu = \lambda \nu$.
 Using equation \eqref{eq:trans-koop}, we have :
 \begin{align*}
  \forall \varphi \in H_\theta(\Omega^\star), \lambda \< \nu, f_{A^\star+C^\star} U_{C^\star} \varphi \> &= \left\< \nu, \mathcal{L}_{A^\star} \left[ f_{A^\star+C^\star} U_{C^\star} \varphi \right] \right\> \\
  &= \left\< \nu, \varphi \mathcal{L}_{A^\star} \left[ e^{C^\star} f_{A^\star+C^\star} \right] \right\>
 \end{align*}
 But by definition of $f_{A^\star+C^\star}$ we also have :
 \[ \mathcal{L}_{A^\star} \left[ e^{C^\star} f_{A^\star+C^\star} \right] = \mathcal{L}_{A^\star+C^\star} f_{A^\star+C^\star} = \lambda_{A^\star+C^\star} f_{A^\star+C^\star} \]
 Hence :
 \[ \forall \varphi \in H_\theta(\Omega^\star), \lambda \< f_{A^\star+C^\star} \nu, U_{C^\star} \varphi \> = \lambda_{A^\star+C^\star} \< f_{A^\star+C^\star} \nu, \varphi \> \]
 which shows that $f_{A^\star+C^\star} \nu \in \tilde{F}_{\frac{\lambda_{A^\star+C^\star}}{\lambda}}(C^\star)$.
 Finally, since $f_{A^\star+C^\star} > 0$, it is clear that $\mathcal{F}_{A^\star+C^\star}$ is an isomorphism.
\end{proof}

We can now combine these two propositions with the results from the previous section to get the proof of theorem \ref{th:koopman}.
Indeed, it is enough to take $f = f_{A+B}$, $\alpha = \lambda_{A+B}$, $g = f_{A^\star+C^\star}$, and $\beta = \lambda_{A^\star+C^\star}$ as given by propositions \ref{prop:trans-koop-func} and \ref{prop:trans-koop-dist} to obtain that :
\[ \Psi_{A,B,C^\star} = \mathcal{E}_{A+B} \Phi_W \mathcal{F}_{A^\star+C^\star}^{-1} \]
In particular, its inverse is :
\[ \Psi_{A,B,C^\star}^{-1} = \mathcal{F}_{A^\star+C^\star} \Phi_W^{-1} \mathcal{E}_{A+B}^{-1} \]
The result follows then immediately from the expression of $\Phi_W^{-1}$ in theorem \ref{th:ruelle}.

\end{document}